\newcommand{\unit}{30pt}
\newcommand{\sst}[2]{\left\{#1\,\middle|\,#2\right\}}
\DeclareMathOperator{\ch}{ch}
\newtheorem{theorem}{Theorem}
\newtheorem{corollary}[theorem]{Corollary}
\newtheorem{lemma}[theorem]{Lemma}
\begin{document}

\begin{frontmatter}[classification=text]
%% EDITOR: this will force the keywords to appear right after the Abstract.
%%   If the abstract is too long and would force the keywords off the
%%   front page, please comment out % [classification=text] above
%%   This way the keywords will be floated on the bottom of the first page
%%   even though the Abstract spills over to the next page.

%%% AUTHOR: Title goes here.  This line is optional.  You must use it
%%   if title has footnote attached or requires nontrivial typesetting,
%%   e.g., inclusion of linebreaks to force nice layout.
\title{A $4$-choosable Graph that is Not $(8:2)$-choosable\titlefootnote{This project falls within the scope of L.I.A.\ STRUCO.}} %% please capitalize all significant words

%%% AUTHOR:
%%% List all authors. If you wish, place grant acknowledgements in \thanks.
%%% In brackets include a short tag for each author.
\author[zd]{Zden\v{e}k Dvo\v{r}{\'a}k\thanks{Supported by the Center of
    Excellence -- Inst.\ for Theor.\ Comp.\ Sci., Prague (project P202/12/G061 of
    Czech Science Foundation).}}
\author[xh]{Xiaolan Hu\thanks{Partially supported by NSFC under grant
    number~11601176.}}
\author[jss]{Jean-S{\'e}bastien Sereni\thanks{Partially supported by P.H.C.
    Barrande~40625WH.}}

%%% AUTHOR: Abstract goes here
\begin{abstract}
    In~1980, Erd\H{o}s, Rubin and Taylor asked whether for all positive
    integers~$a$, $b$, and~$m$, every $(a:b)$-choosable graph is also
    $(am:bm)$-choosable.  We provide a negative answer by exhibiting a
    $4$-choosable graph that is not $(8:2)$-choosable.
\end{abstract}
\end{frontmatter}

%%% CUSTOM extra space added so that the text is not
%%% too close to the keywords
\vspace{1em}
%%% AUTHOR: body of paper starts here

Coloring the vertices of a graph with sets of colors (that is, each vertex is
assigned a fixed-size subset of the colors such that adjacent vertices are
assigned disjoint sets) is a fundamental notion, which in particular captures
fractional colorings.  The fractional chromatic number of a graph~$G$ can
indeed be defined to be the infimum (which actually is a minimum) of the
ratios~$a/b$ such that, if every vertex of~$G$ is replaced by a clique of
order~$b$ and every edge of~$G$ is replaced by a complete bipartite graph
between the relevant cliques, then the chromatic number of the obtained graph
is at most~$a$.

In their seminal work on list coloring, Erd\H{o}s, Rubin and
Taylor~\cite{erdosrubintaylor1979} raised several intriguing questions about
the list version of set coloring. Before stating them, let us review the
relevant definitions.

\paragraph{Set coloring.} A function that assigns a set to each vertex of a
graph is a \emph{set coloring} if the sets assigned to adjacent vertices are
disjoint.  For positive integers~$a$ and~$b\le a$, an \emph{$(a:b)$-coloring}
of a graph~$G$ is a set coloring with range~$\binom{\{1,\dotsc, a\}}{b}$,
\emph{i.e.}, a set coloring that to each vertex of~$G$ assigns a $b$-element subset
of~$\{1,\dotsc, a\}$.  The concept of~$(a:b)$-coloring is a generalization of
the conventional vertex coloring. In fact, an $(a:1)$-coloring is exactly an
ordinary proper $a$-coloring.

A \emph{list assignment} for a graph~$G$ is a function~$L$ that to each
vertex~$v$ of~$G$ assigns a set~$L(v)$ of colors.  A set coloring $\varphi$
of~$G$ is an \emph{$L$-set coloring} if $\varphi(v)\subseteq L(v)$ for
every~$v\in V(G)$.  For a positive integer~$b$, we say that~$\varphi$ is an
\emph{$(L:b)$-coloring} of~$G$ if~$\varphi$ is an $L$-set coloring
and~$|\varphi(v)|=b$ for every~$v\in V(G)$.  If such an $(L:b)$-coloring
exists, then~$G$ is \emph{$(L:b)$-colorable}.  For an integer~$a\ge b$, we say
that~$G$ is \emph{$(a:b)$-choosable} if~$G$ is $(L:b)$-colorable for every
list assignment~$L$ such that~$|L(v)|=a$ for each~$v\in V(G)$.  We abbreviate $(L:1)$-coloring,
$(L:1)$-colorable, and~$(a:1)$-choosable to~$L$-coloring, $L$-colorable,
and~$a$-choosable, respectively.

\paragraph{Questions and results.} It is straightforward to see that if a graph
is $(a:b)$-colorable, it is also $(am:bm)$-colorable for every positive
integer~$m$: we can simply replace every color in an $(a:b)$-coloring by~$m$
new colors.  However, this argument fails in the list coloring setting, leading
Erd\H{o}s, Rubin and Taylor~\cite{erdosrubintaylor1979} to ask whether every
$(a:b)$-choosable graph is also $(am:bm)$-choosable whenever~$m\ge 1$. A positive
answer to this question is sometimes referred to as ``the $(am:bm)$-conjecture''.
Using the characterization of $2$-choosable graphs~\cite{erdosrubintaylor1979},
Tuza and Voigt~\cite{tuvoigt} provided a positive answer when~$a=2$ and~$b=1$.
In the other direction, Gutner and Tarsi~\cite{GuTa09} demonstrated that if~$k$
and~$m$ are positive integers and~$k$ is odd, then every $(2mk : mk)$-choosable
graph is also $2m$-choosable.

Formulated differently, the question is to know whether every $(a:b)$-choosable
graph is also~$(c:d)$-choosable whenever~$c/d=a/b$ and~$c\ge a$. This formulation raises the
same question when~$c/d>a/b$, which was also asked by Erd\H{o}s, Rubin and
Taylor~\cite{erdosrubintaylor1979}. About ten years ago, Gutner and
Tarsi~\cite{GuTa09} answered this last question negatively, by studying the
$k$th choice number of a graph for large values of~$k$. More precisely, the
\emph{$k$th choice number} of a graph~$G$ is~$\ch_{:k}(G)$, the least
integer~$a$ for which~$G$ is $(a:k)$-choosable. Their result reads as follows.

\begin{theorem}[Gutner \& Tarsi, 2009]\label{thm-GT}
Let~$G$ be a graph. For every positive real~$\epsilon$, there exists an
    integer~$k_0$ such that $\ch_{:k}(G)\leq  k(\chi(G) +\epsilon )$ for
    every~$k\geq k_0$.
\end{theorem}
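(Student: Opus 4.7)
The plan is a probabilistic random-partition argument. Fix a proper $\chi$-coloring $c\colon V(G)\to\{1,\dotsc,\chi\}$ of $G$, with $\chi=\chi(G)$, and let $L$ be any list assignment with $|L(v)|=a$ for every vertex, where $a=\lfloor k(\chi+\epsilon)\rfloor$. Write $\Omega=\bigcup_{v\in V(G)}L(v)$. I would assign each color of $\Omega$ independently and uniformly at random to one of $\chi$ parts, producing a random partition $(S_1,\dotsc,S_\chi)$ of $\Omega$. Should I find a realization for which $|L(v)\cap S_{c(v)}|\ge k$ for every vertex, then choosing any $k$-subset $\varphi(v)\subseteq L(v)\cap S_{c(v)}$ yields an $(L:k)$-coloring: adjacent vertices $u$ and $v$ have $c(u)\ne c(v)$, so $\varphi(u)\subseteq S_{c(u)}$ and $\varphi(v)\subseteq S_{c(v)}$ are automatically disjoint.

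The key step is thus to verify that a suitable realization exists once $k$ is large enough. For a vertex $v$ with $c(v)=i$, the quantity $X_v=|L(v)\cap S_i|$ is a binomial random variable with $a$ trials and success probability $1/\chi$; its expectation equals $a/\chi$, which exceeds $k\bigl(1+\epsilon/(2\chi)\bigr)$ as soon as $k\ge 2\chi/\epsilon$. A standard Chernoff bound then yields $\Pr[X_v<k]\le e^{-\alpha k}$ for some positive constant $\alpha$ depending only on $\epsilon$ and $\chi$, and a union bound over the $n=|V(G)|$ vertices gives $\Pr[\exists\,v\colon X_v<k]\le n\, e^{-\alpha k}$. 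This quantity is strictly less than $1$ provided $k\ge k_0$ for a suitable $k_0=k_0(G,\epsilon)$, so the desired partition exists for all $k\ge k_0$, which proves $\ch_{:k}(G)\le a\le k(\chi(G)+\epsilon)$.

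The only obstacle I anticipate is quantitative bookkeeping: pinning down the Chernoff constant and verifying that $k_0$ depends solely on $G$ (through $n$ and $\chi$) and on $\epsilon$, and not on the particular assignment $L$ or the ground set $\Omega$. Conceptually the argument captures the intuition that taking $k$ large smooths out the list structure, so that an ordinary proper coloring of $G$ can be lifted to an $(L:k)$-coloring by independently ``coloring the color universe'' itself; the additive slack $\epsilon$ in the list size is precisely what allows the binomial $X_v$ to concentrate above its target $k$ rather than merely hitting it on average.
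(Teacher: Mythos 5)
Your argument is correct: the random partition of the colour universe into $\chi(G)$ classes indexed by a fixed proper colouring, combined with a Chernoff bound and a union bound over the vertices, does establish the theorem, and all the constants you need depend only on $|V(G)|$, $\chi(G)$ and $\epsilon$, not on $L$ or $\Omega$. Note that the paper itself offers no proof of this statement---it is quoted from Gutner and Tarsi's article---so there is nothing internal to compare against; your probabilistic argument is essentially the standard (and, to my knowledge, the original) proof of this result.
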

As a direct corollary, one deduces that for all integers~$m\ge3$
and~$\ell>m$, there exists a graph that is~$(a:b)$-choosable and
not~$(\ell:1)$-choosable with~$\frac{a}{b}=m$. (To see this, one can for
example apply Theorem~\ref{thm-GT} with~$\varepsilon=1$ to the disjoint union
of a clique of order~$m-1$ and a complete balanced bipartite graph with choice
number~$\ell+1$.)

Another related result that should be mentioned here  was obtained by Alon,
Tuza and Voigt~\cite{alonlchoos}.  They proved that for every graph~$G$,
\[
    \inf\sst{\frac{a}{b}}{\text{$G$ is $(a:b)$-choosable}}=\inf\sst{\frac{a}{b}}{\text{$G$ is $(a:b)$-colorable}}.
\]
In other words, the fractional choice number of a graph equals its fractional chromatic number.

The purpose of our work is to provide a negative answer to Erd\H{o}s, Rubin and
Taylor's question when~$a=4$ and~$b=1$.
\begin{theorem}\label{thm-main}
There exists a graph~$G$ that is $4$-choosable, but not $(8:2)$-choosable.
\end{theorem}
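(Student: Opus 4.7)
The plan is to exhibit an explicit graph $G$ together with a specific $8$-list assignment $L$ witnessing that $G$ is not $(L:2)$-colorable, and a separate argument showing that $G$ is $L'$-colorable for every list assignment $L'$ with $|L'(v)|=4$ on every vertex.

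For the non-$(8:2)$-choosability half, I would look for $G$ with a layered structure: a set of ``source'' vertices whose chosen $2$-subsets of an $8$-element palette impose constraints on adjacent ``blocker'' vertices. The lists are designed carefully, drawing from the $\binom{8}{2}=28$ possible $2$-subsets of $\{1,\dotsc,8\}$, so that every way of picking $2$-subsets on the source side rules out all $2$-subsets available in the list of at least one blocker. A natural setup is to use a small source set (perhaps just a few vertices) and encode the blocker lists so that a pigeonhole argument over the remaining $2$-subsets of the palette forces a contradiction somewhere. This step is essentially combinatorial once the construction is fixed and reduces to finite checking, possibly assisted by computer search.

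For the $4$-choosability half, given any list assignment $L'$ with $|L'(v)|=4$, the goal is to construct a proper $L'$-coloring of $G$. Because the gadget should be small, this is amenable to structural case analysis based on the overlap pattern of the lists $L'(v)$. A kernel-type argument, or an Alon--Tarsi polynomial computation, would handle the generic case; vertices of low degree or with small list overlap can be coloured greedily after the rest of the graph is handled. Exploiting symmetries of $G$ (for instance a bipartite structure or vertex-transitive action) should substantially cut down the number of essentially distinct list patterns that must be checked.

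The hard part will be balancing the two properties. To break $(8:2)$-choosability, $G$ needs enough structure that the $2$-subsets chosen on the source side genuinely exhaust the $2$-subsets available to some blocker; but such richness tends to also endanger $4$-choosability. The key phenomenon to exploit is that a $2$-subset at a vertex interacts with a neighbour's $2$-subset through the joint non-intersection of two pairs of colours, which is qualitatively different from the single-colour conflict governing ordinary list colouring. The construction must therefore pinpoint a gadget in which this distinction translates into an actual separation, and confirming that the gadget does not secretly fail $4$-choosability in some unexpected list configuration is likely the most delicate verification required.
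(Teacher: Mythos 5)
There is a genuine gap: your proposal is a research plan rather than a proof. No graph is exhibited, no lists are written down, and neither of the two required properties is actually verified for any concrete object. Phrases such as ``I would look for'', ``possibly assisted by computer search'', and ``is likely the most delicate verification required'' signal that the entire content of the theorem --- the construction itself --- is missing. In particular, the hope that a single small source/blocker gadget with a pigeonhole argument over $\binom{8}{2}=28$ pairs will suffice is not substantiated, and the paper's actual construction suggests it would not be: one needs a rather elaborate cascade of gadgets, not one local counting step.

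The concrete mechanism you are missing is the following. The core obstruction is a $5$-cycle $v_1\dots v_5$ with $4$-lists $\{1,2,5,6\}$, $\{1,4,5,6\}$, $\{3,4,5,6\}$, $\{3,4,5,6\}$, $\{2,4,5,6\}$: in any $(L_0:2)$-coloring the color classes satisfy $\sum_c|V_c|=10$, yet each $|V_c|\le 2$ and the three colors $1,2,3$ each appear in the lists of only two \emph{adjacent} vertices, giving $\sum_c|V_c|\le 3\cdot2+3\cdot1=9$, a contradiction; meanwhile any assignment of $2$-lists with $|L(v_1)\cap L(v_3)|\le 1$ is colorable. To turn this into a $4$-choosable counterexample one must (a) enlarge the lists of $v_1,v_3$ to $\{1,\dots,6\}$ and add a path forcing them back down in the $(L:2)$-setting, (b) build a chain of auxiliary ``relaxed'' gadgets (involving lists of size $8$ and extra colors $7,8$) that force the set $\{7,8\}$ onto designated vertices in every $(L:2)$-coloring while never obstructing colorings from half-size lists, so that adding $\{7,8\}$ to the lists of the remaining $4$-list vertices is harmless in the $(8:2)$ direction, and (c) apply a uniformization step --- a disjoint clique $K_4$ on a fresh palette with one copy of the gadget attached for each of its $(8:2)$-colorings --- to make every list have size exactly $8$ (respectively $4$). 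None of steps (a)--(c), nor the $5$-cycle counting obstruction itself, is present in your proposal, so as written it cannot be completed into a proof without supplying essentially all of the paper's content.
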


We build such a graph by incrementally combining pieces with certain
properties.  Each piece is defined, and its relevant properties established, in
the forthcoming lemmas.

\paragraph{Gadgets and lemmas.} A \emph{gadget} is a pair~$(G,L_0)$, where~$G$
is a graph and~$L_0$ is an assignment of lists of even size. Given a gadget~$(G,L_0)$,
a \emph{half-list assignment} for~$G$ is a list assignment~$L$ for~$G$ such that
$|L(v)|=|L_0(v)|/2$ for every~$v\in V(G)$.
Let us start the construction by a key observation on list colorings of~$5$-cycles.
\begin{lemma}\label{lemma-pent}
    Consider the gadget~$(C,L_0)$, presented in Figure~\ref{fig-C}, where $C=v_1v_2v_3v_4v_5$ is a $5$-cycle, $L_0(v_1)=\{1,2,5,6\}$, $L_0(v_2)=\{1,4,5,6\}$,
$L_0(v_3)=L_0(v_4)=\{3,4,5,6\}$ and $L_0(v_5)=\{2,4,5,6\}$.  Then~$C$ is $L$-colorable for every half-list assignment~$L$
such that $|L(v_1)\cap L(v_3)|\le 1$, but~$C$ is not $(L_0:2)$-colorable.
\end{lemma}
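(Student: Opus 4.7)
The lemma has two halves, which I would handle separately. For the non-$(L_0:2)$-colorability, I would exploit the coincidence $L_0(v_3)=L_0(v_4)=\{3,4,5,6\}$: any $(L_0:2)$-coloring~$\phi$ must assign to $v_3$ and $v_4$ disjoint $2$-subsets of a $4$-element set, so $\phi(v_3)\cup\phi(v_4)=\{3,4,5,6\}$. Since $\{3,4,5,6\}$ admits only three unordered partitions into $2$-subsets, this leaves six ordered choices of $(\phi(v_3),\phi(v_4))$. For each, I would enumerate the admissible candidates for $\phi(v_2)\subseteq\{1,4,5,6\}\setminus\phi(v_3)$ and $\phi(v_5)\subseteq\{2,4,5,6\}\setminus\phi(v_4)$, and verify that $\{1,2,5,6\}\setminus(\phi(v_2)\cup\phi(v_5))$ always has size strictly less than~$2$, precluding a valid $\phi(v_1)$. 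This is a short mechanical enumeration.

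For the colorability, let $L$ be any half-list assignment with $|L(v_1)\cap L(v_3)|\le 1$, so every list has size~$2$. I would focus on the ordered pairs $(c_1,c_3)\in L(v_1)\times L(v_3)$ with $c_1\neq c_3$; the hypothesis guarantees at least three such pairs. Such a pair extends to a valid $L$-coloring of~$C$ unless one of the following obstructions occurs: (a)~$L(v_2)=\{c_1,c_3\}$, or (b)~$L(v_5)\setminus\{c_1,c_4\}$ is empty for every $c_4\in L(v_4)\setminus\{c_3\}$. Obstruction~(a) eliminates at most one pair, because two such pairs would force $L(v_2)\subseteq L(v_1)\cap L(v_3)$, which is incompatible with $|L(v_2)|=2$ and $|L(v_1)\cap L(v_3)|\le 1$.

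For obstruction~(b), I would first observe that $c_3\in L(v_4)$ is necessary: otherwise two distinct choices of $c_4$ jointly imply $L(v_5)\subseteq\{c_1\}$, contradicting $|L(v_5)|=2$. Hence there is a unique $c_4=L(v_4)\setminus\{c_3\}$, and the condition $L(v_5)=\{c_1,c_4\}$ pins down $(c_1,c_3)$ up to the degenerate case $L(v_4)=L(v_5)$, in which the two candidate pairs both satisfy $c_1=c_3$ and are outside our count. Obstruction~(b) therefore contributes at most one bad pair, so at most two of our at-least-three pairs are non-extendable and one survives. The surviving pair is completed to an $L$-coloring by picking any $c_2\in L(v_2)\setminus\{c_1,c_3\}$, an appropriate $c_4\in L(v_4)\setminus\{c_3\}$, and $c_5\in L(v_5)\setminus\{c_1,c_4\}$. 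The delicate step in writing this up is precisely the bookkeeping for obstruction~(b); the rest is straightforward casework.
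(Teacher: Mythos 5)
Your plan is correct, but both halves take a genuinely different route from the paper. For the positive direction, the paper does not count extendable pairs: it observes that $|L(v_1)\cup L(v_3)|\ge 3>|L(v_2)|$, so some color $c\in L(v_1)\cup L(v_3)$ avoids $L(v_2)$; assigning $c$ to (say) $v_1$ and then coloring $v_5,v_4,v_3,v_2$ greedily along the path works, since the last vertex $v_2$ only has to avoid the color of $v_3$. This replaces your obstruction bookkeeping---which I checked and which does go through, including the delicate point that two (b)-bad pairs force $L(v_4)=L(v_5)$ and then degenerate to $c_1=c_3$---by a two-line argument; your version is correct but carries several cases that the greedy ordering makes unnecessary. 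For the negative direction, the paper avoids enumeration entirely via double counting: if $V_c$ is the set of vertices using color $c$, then $\sum_c|V_c|=10$, each $V_c$ is independent in $C_5$ so $|V_c|\le 2$, and each of the colors $1,2,3$ occurs only in the lists of two adjacent vertices, so $|V_1|,|V_2|,|V_3|\le 1$, giving the contradiction $10\le 3\cdot 2+3\cdot 1=9$. Your six-case enumeration starting from $\phi(v_3)\cup\phi(v_4)=\{3,4,5,6\}$ does verify the same fact (I checked all six branches), but the counting argument is what makes transparent \emph{why} the list pattern was chosen, and it scales to similar gadgets without recomputation. If you write up your version, the main thing to state carefully is the obstruction-(b) analysis; everything else is routine.
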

\begin{proof}
The first statement is well known, but let us give the easy proof for completeness: since $|L(v_1)\cap L(v_3)|\le 1$, we have $|L(v_1)\cup L(v_3)|\ge 3$,
    and thus~$L(v_1)$ or~$L(v_3)$ contains a color~$c_6$ not belonging to~$L(v_2)$.  By symmetry, we can assume that~$c_6\in L(v_1)$.
    We color~$v_1$ by~$c_6$ and then for $i=5,4,3,2$ in order, we color~$v_i$ by a color~$c_i\in L(v_i)\setminus \{c_{i+1}\}$.
    The resulting coloring is proper---we have $c_2\neq c_6$, since $c_6\not\in L(v_2)$.

Suppose now that~$C$ has an $(L_0:2)$-coloring, and for $c\in\{1,\dotsc,6\}$ let~$V_c$ be the set of vertices of~$C$ on which the color~$c$ is used.
Since two colors are used on each vertex of~$C$, we have $\sum_{c=1}^6 |V_c|=10$.  On the other hand,~$V_c$ is an independent set of a $5$-cycle, and thus $|V_c|\le 2$
for every color~$c$.  Furthermore, color~$1$ only appears in the lists of~$v_1$ and~$v_2$, which are adjacent in~$C$. It follows that~$|V_1|\le1$.
The situation is similar for color~$2$, which appears only in the lists of~$v_1$ and~$v_5$, and also for color~$3$, which only appears
in the lists of~$v_3$ and~$v_4$.
Consequently, $\sum_{c=1}^6 |V_c|\le 3\cdot 2+3\cdot 1=9$, which is a contradiction.
\end{proof}
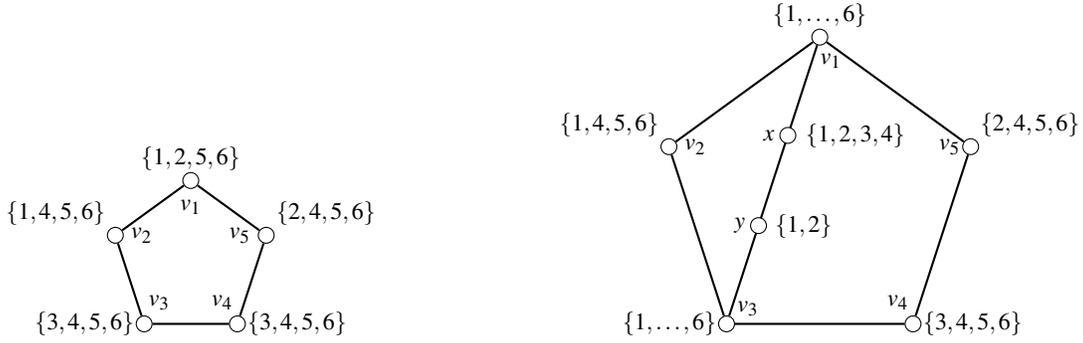
\begin{figure}
\begin{center}
\begin{tikzpicture}[vertex/.style={circle, draw=black, fill=white, inner sep=0.5pt, minimum size=6pt},edge/.style={thick},font=\footnotesize]
 \foreach \i in {1,...,5}
    \draw (18+72*\i:\unit) node[vertex] (a\i) {};
    \draw[edge] (a5)--(a1)--(a2)--(a3)--(a4)--(a5);
\draw[above] (a1) node {$\{1,2,5,6\}$};
\draw[above left] (a2) node {$\{1,4,5,6\}$};
\draw[left] (a3) node {$\{3,4,5,6\}$};
\draw[right] (a4) node {$\{3,4,5,6\}$};
\draw[above right] (a5) node {$\{2,4,5,6\}$};
    \draw (a1)++(-90:3.5mm) node {$v_1$};
\draw (a2)++(0:3.5mm) node {$v_2$};
    \draw (a5)++(0:-3.5mm) node {$v_5$};
\draw (a4)++(-55:-3.5mm) node {$v_4$};
\draw (a3)++(-125:-3.5mm) node {$v_3$};
\end{tikzpicture}
    \hspace{2cm}
\begin{tikzpicture}[vertex/.style={circle, draw=black, fill=white, inner sep=0.5pt, minimum size=6pt},edge/.style={thick},font=\footnotesize]
 \foreach \i in {1,...,5}
    \draw (18+72*\i:2*\unit) node[vertex] (a\i) {};
    \draw[edge] (a5)--(a1)--(a2)--(a3)--(a4)--(a5);
\draw[above] (a1) node {$\{1,\dotsc,6\}$};
\draw[above left] (a2) node {$\{1,4,5,6\}$};
\draw[left] (a3) node {$\{1,\dotsc,6\}$};
\draw[right] (a4) node {$\{3,4,5,6\}$};
\draw[above right] (a5) node {$\{2,4,5,6\}$};
    \draw (a1)++(-65:3.5mm) node {$v_1$};
\draw (a2)++(0:3.5mm) node {$v_2$};
\draw[left] (a5) node {$v_5$};
\draw (a4)++(-55:-3.5mm) node {$v_4$};
\draw (a3)++(215:-3.5mm) node {$v_3$};
\draw[edge] (a1)--(a3);
\path (a1)--(a3) node[pos=1/3,vertex] (x) {} node[pos=2/3,vertex] (y) {};
\draw (x)++(180:2.5mm) node {$x$};
\draw (y)++(180:2.5mm) node {$y$};
\draw (x)++(0:9mm) node {$\{1,2,3,4\}$};
\draw (y)++(0:6mm) node {$\{1,2\}$};
\end{tikzpicture}
    \caption{The gadget~$(C,L_0)$ of Lemma~\ref{lemma-pent} is depicted on the left;
    the gadget~$(G_1,L_1)$ of Corollary~\ref{cor-core} is depicted on the right.}\label{fig-C}
\end{center}
\end{figure}
\begin{corollary}\label{cor-core}
    Consider the gadget~$(G_1,L_1)$, presented in Figure~\ref{fig-C},
    where~$G_1$ consists of a $5$-cycle $C=v_1v_2v_3v_4v_5$ and a
    path~$v_1xyv_3$, with $L_1(v_1)=L_1(v_3)=\{1,\dotsc,6\}$,
    $L_1(v_2)=\{1,4,5,6\}$, $L_1(v_4)=\{3,4,5,6\}$, $L_1(v_5)=\{2,4,5,6\}$,
    $L_1(x)=\{1,2,3,4\}$ and $L_1(y)=\{1,2\}$.  Then $G_1$ is $L$-colorable for
    every half-list assignment~$L$ such that $L(v_1)=L(v_3)$, but $G_1$ is not
    $(L_1:2)$-colorable.
\end{corollary}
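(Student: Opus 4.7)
The plan is to treat the two assertions in turn, in both cases reducing to Lemma~\ref{lemma-pent}. For the non-colorability assertion, I would argue by contradiction from a hypothetical $(L_1:2)$-coloring $\varphi$ of $G_1$. Since $|L_1(y)|=2$, we must have $\varphi(y)=\{1,2\}$; the adjacency $xy$ and $L_1(x)=\{1,2,3,4\}$ then force $\varphi(x)=\{3,4\}$. The resulting restrictions $\varphi(v_1)\subseteq L_1(v_1)\setminus\{3,4\}=\{1,2,5,6\}$ and $\varphi(v_3)\subseteq L_1(v_3)\setminus\{1,2\}=\{3,4,5,6\}$, combined with the unchanged lists at $v_2,v_4,v_5$, recover exactly the gadget $(C,L_0)$ of Lemma~\ref{lemma-pent}, so the restriction of $\varphi$ to $C$ would be an $(L_0:2)$-coloring---a contradiction.

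For the colorability assertion, let $L$ be a half-list assignment satisfying $L(v_1)=L(v_3)=:S$ with $|S|=3$. My strategy is to colour the path $v_1xyv_3$ first and then extend to $C$ by replaying the greedy argument used in the proof of Lemma~\ref{lemma-pent}. The colour $c_y$ of $y$ is forced because $|L(y)|=1$, after which any $c_x\in L(x)\setminus\{c_y\}$ will do (this set is nonempty since $|L(x)|=2$). Setting $L'(v_1)=L(v_1)\setminus\{c_x\}$, $L'(v_3)=L(v_3)\setminus\{c_y\}$, and $L'(v_i)=L(v_i)$ for $i\in\{2,4,5\}$, all of size at least $2$, it suffices to produce an $L'$-coloring of $C$: together with the colours already assigned to $x$ and $y$, this forms an $L$-coloring of $G_1$.

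The one delicate point is to verify that $|L'(v_1)\cup L'(v_3)|\ge 3$, so that we can select a colour $c_6\in(L'(v_1)\cup L'(v_3))\setminus L(v_2)$ and run the greedy extension: colour whichever of $v_1,v_3$ has $c_6$ in its reduced list with $c_6$, then colour the remaining four cycle vertices greedily around $C$, noting that the cycle closes at $v_2$ because $c_6\notin L(v_2)$. The required union bound follows from a short case analysis on whether $c_x$ and $c_y$ belong to $S$, using that $|S|=3$ and $c_x\ne c_y$: if at most one of them lies in $S$, then $L'(v_1)$ or $L'(v_3)$ equals $S$ outright; and if both lie in $S$, then $L'(v_1)$ and $L'(v_3)$ are two distinct size-$2$ subsets of $S$, so $L'(v_1)\cup L'(v_3)=S$. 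In every case the union has size $3$, and the greedy extension goes through.
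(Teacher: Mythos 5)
Your proof is correct and follows essentially the same route as the paper: color $y$ then $x$ first, observe that in the $(L_1:2)$ direction the colors of $x$ and $y$ are forced so that the cycle inherits exactly the lists $L_0$ of Lemma~\ref{lemma-pent}, and in the positive direction reduce the cycle to the situation of that lemma. The only cosmetic difference is that the paper trims $L(v_1)\setminus\{c_x\}$ and $L(v_3)\setminus\{c_y\}$ to two \emph{distinct} $2$-element sets so that the first part of Lemma~\ref{lemma-pent} applies verbatim, whereas you keep the possibly $3$-element lists and replay its greedy argument, verifying $|L'(v_1)\cup L'(v_3)|\ge 3$ directly; both are sound.
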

\begin{proof}
Let~$L$ be a half-list assignment for~$G_1$. First $L$-color~$y$ and~$x$ by
    colors $c_y\in L(y)$ and~$c_x\in L(x)\setminus\{c_y\}$, respectively.
    Since $c_x\neq c_y$ and~$L(v_1)=L(v_3)$, there exist sets $L'(v_1)\subseteq
    L(v_1)\setminus\{c_x\}$ and~$L'(v_3)\subseteq L(v_3)\setminus\{c_y\}$ of
    size two such that $L'(v_1)\neq L'(v_3)$.  Let~$L'(v_i)=L(v_i)$ for~$i\in
    \{2,4,5\}$. Lemma~\ref{lemma-pent} implies that~$C$ is $L'$-colorable,
    which yields an $L$-coloring of~$G$.

In an $(L_1:2)$-coloring, the vertex~$y$ would have to be assigned~$\{1,2\}$
    and~$x$ would have to be assigned~$\{3,4\}$, and thus the sets of available
    colors for~$v_1$ and for~$v_3$ would have to be~$\{1,2,5,6\}$ and~$\{3,4,5,6\}$,
    respectively.  However, no such $(L_1:2)$ coloring of~$C$ exists according
    to Lemma~\ref{lemma-pent}.
\end{proof}

Next we construct auxiliary gadgets, which will be combined with the gadget from Corollary~\ref{cor-core}
to deal with the case where~$L(v_1)\neq L(v_3)$. 
Let~$G$ be a graph, let~$S$ be a subset of vertices of~$G$ and~$L$ a list assignment for~$G$.
An \emph{$L$-coloring} of~$S$ is a coloring of the subgraph of~$G$ induced by~$S$.
Moreover, if~$S'$ is a subset of vertices of~$G$ that contains~$S$ and~$\varphi'$ is an $L$-coloring of~$S'$,
then~$\varphi'$ \emph{extends} $\varphi$ if~$\varphi'|S=\varphi$.
Let~$(G,L_0)$ be a gadget, let~$v_1$ and~$v_3$ be distinct vertices of~$G$,
and let~$S$ be a set of vertices of~$G$ not containing $v_1$ and~$v_3$.  The gadget is \emph{$(v_1,v_3,S)$-relaxed}
if every half-list assignment~$L$ satisfies at least one of the following conditions.
\begin{itemize}
\item[(i)] There exists an $L$-coloring~$\psi_0$ of~$\{v_1,v_3\}$ such that every $L$-coloring of~$S\cup \{v_1,v_3\}$
extending~$\psi_0$ extends to an $L$-coloring of~$G$.
\item[(ii)] $L(v_1)=L(v_3)$ and there exists an~$L$-coloring~$\psi_0$ of~$S$ such that every $L$-coloring of~$S\cup \{v_1,v_3\}$
extending~$\psi_0$ extends to an $L$-coloring of~$G$.
\end{itemize}

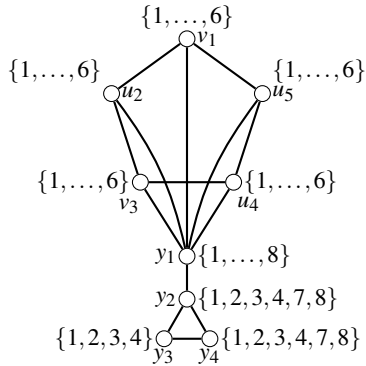
\begin{figure}[!ht]
    \begin{center}
\begin{tikzpicture}[vertex/.style={circle, draw=black, fill=white, inner sep=0.5pt, minimum size=6pt},edge/.style={thick},font=\footnotesize]
 \foreach \i in {1,...,5}
    \draw (18+72*\i:\unit) node[vertex] (a\i) {};
\begin{scope}[yshift={-2*sqrt(\unit*\unit*(1-.588*.588))-\unit}]
\foreach \i in {2,3,4}
  \draw (210+120*\i:\unit/3) node[vertex] (y\i) {};   
 \draw (0,7*\unit/8) node[vertex] (y1) {};
\end{scope}
    \draw[edge] (a5)--(a1)--(a2)--(a3)--(a4)--(a5);
    \draw[edge] (y1) to[bend right=12.5] (a2);
    \draw[edge] (y1) to[bend left=12.5] (a5);
    \draw[edge] (a3)--(y1)--(a1);
    \draw[edge] (y1)--(a4);
\draw[edge] (y1)--(y2);
\draw[edge] (y2)--(y3)--(y4)--(y2);
\draw[above] (a1) node {$\{1,\dotsc,6\}$};
\draw[above left] (a2) node {$\{1,\dotsc,6\}$};
\draw[left] (a3) node {$\{1,\dotsc,6\}$};
\draw[right] (a4) node {$\{1,\dotsc,6\}$};
\draw[above right] (a5) node {$\{1,\dotsc,6\}$};
\draw[right] (y1) node {$\{1,\dotsc,8\}$};
\draw[right] (y2) node {$\{1,2,3,4,7,8\}$};
\draw[right] (y4) node {$\{1,2,3,4,7,8\}$};
\draw[left] (y3) node {$\{1,2,3,4\}$};
\draw[left] (y1) node {$y_1$};
\draw[left] (y2) node {$y_2$};
\draw[below] (y3) node {$y_3$};
\draw[below] (y4) node {$y_4$};
\draw[right] (a1) node {$v_1$};
\draw[right] (a2) node {$u_2$};
\draw[right] (a5) node {$u_5$};
\draw (a4)++(-55:3.5mm) node {$u_4$};
\draw (a3)++(245:3.5mm) node {$v_3$};
\end{tikzpicture}
    \caption{The gadget~$(G_2,L_2)$ of Lemma~\ref{lemma-stagad}}\label{fig-stagad}
\end{center}
\end{figure}

\begin{lemma}\label{lemma-stagad}
    Consider the gadget~$(G_2,L_2)$ presented in Figure~\ref{fig-stagad},
    where~$G_2$ consists of a $5$-cycle $C_2=v_1u_2v_3u_4u_5$, a vertex~$y_1$
    adjacent to all vertices of~$C_2$, a triangle $y_2y_3y_4$, and an
    edge~$y_1y_2$, with $L_2(v)=\{1,\dotsc,6\}$ for every~$v\in V(C_2)$,
    $L_2(y_1)=\{1,\dotsc,8\}$, $L_2(y_2)=L_2(y_4)=\{1,2,3,4,7,8\}$,
    and~$L_2(y_3)=\{1,2,3,4\}$.  The gadget is $(v_1,v_3,\{y_4\})$-relaxed,
    and~$\varphi(y_4)\cap\{7,8\}\neq\varnothing$ for every
    $(L_2:2)$-coloring~$\varphi$ of~$G_2$.
\end{lemma}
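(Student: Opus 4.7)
My plan is to handle the two statements separately. The non-$(L_2:2)$-coloring claim is a short counting argument, while the relaxed property requires a case analysis organized around the colour assigned to~$y_1$.

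For the non-$(L_2:2)$-coloring statement, suppose that~$\varphi$ is an $(L_2:2)$-coloring of~$G_2$ with $\varphi(y_4)\cap\{7,8\}=\varnothing$, so $\varphi(y_4)\subseteq\{1,2,3,4\}$. Since $\varphi(y_3)$ is a 2-subset of~$\{1,2,3,4\}$ disjoint from~$\varphi(y_4)$, we have $\varphi(y_3)\cup\varphi(y_4)=\{1,2,3,4\}$; disjointness then forces $\varphi(y_2)=\{7,8\}$ and hence $\varphi(y_1)\subseteq\{1,\dotsc,6\}$. The restriction of~$\varphi$ to~$C_2$ would therefore be a $(4:2)$-coloring of a 5-cycle with palette $\{1,\dotsc,6\}\setminus\varphi(y_1)$, which is impossible because disjoint 2-subsets of a 4-element set are complementary, forcing an alternating pattern incompatible with odd cycle length.

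For the relaxed property, let~$L$ be a half-list assignment, so that $|L(v)|=3$ for each~$v\in V(C_2)$, $|L(y_1)|=4$, $|L(y_2)|=|L(y_4)|=3$, and $|L(y_3)|=2$. The guiding principle is that colouring~$y_1$ with a colour in~$\{7,8\}$ leaves the lists on~$C_2$ untouched (they are subsets of~$\{1,\dotsc,6\}$), after which the 5-cycle is coloured greedily in the order $u_5, u_4, u_2$---each vertex has a 3-list and at most two forbidden colours. If $L(y_1)\cap\{7,8\}\neq\varnothing$, I aim for~(i): fix $c_{v_1}\in L(v_1)$ and $c_{v_3}\in L(v_3)$ arbitrarily, and for every $c_{y_4}\in L(y_4)$, colour the triangle~$y_2y_3y_4$ so that some element of $L(y_1)\cap\{7,8\}$ differs from~$c_{y_2}$; a short subcase analysis on $|L(y_1)\cap\{7,8\}|\in\{1,2\}$ together with the position of~$c_{y_4}$ relative to~$L(y_3)$ verifies this, and then colouring~$y_1$ and~$C_2$ as above completes the extension.

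The remaining case is $L(y_1)\subseteq\{1,\dotsc,6\}$, where $c_{y_1}$ may restrict some list on~$C_2$. I split further by whether $L(v_1)=L(v_3)$: if not, aim for~(i) by picking $(c_{v_1},c_{v_3})$ using the difference of the lists to avoid the failure configurations for~$c_{y_1}$; if so, aim for~(ii) by fixing~$c_{y_4}$ first and then, for each pair $(c_{v_1},c_{v_3})\in L(v_1)\times L(v_1)$, finding a suitable $c_{y_1}\in L(y_1)\setminus\{c_{y_2}\}$. The main obstacle is this last sub-case: the configurations forcing~$C_2$ to be uncolourable reduce to either $L(u_2)=\{c_{v_1},c_{v_3},c_{y_1}\}$ (which pins~$c_{y_1}$ uniquely) or the ``shared third element'' pattern $L(u_4)=\{c_{v_3},c_{y_1},z\}$ and $L(u_5)=\{c_{v_1},c_{y_1},z\}$ for some~$z$; bounding the number of bad values for~$c_{y_1}$ and exploiting the $L(v_1)=L(v_3)$ symmetry to switch between $c_{v_1}=c_{v_3}$ and $c_{v_1}\neq c_{v_3}$ should isolate a valid choice among the four elements of~$L(y_1)$, though tracking all extremal configurations is the step requiring the most delicate bookkeeping.
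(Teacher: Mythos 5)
Your argument for the second assertion (that $\varphi(y_4)\cap\{7,8\}\neq\varnothing$ for every $(L_2:2)$-coloring $\varphi$) is correct and essentially identical to the paper's. The proof of the relaxed property, however, has a genuine gap in your first case and is explicitly unfinished in your last one. In the case $L(y_1)\cap\{7,8\}\neq\varnothing$ you fix $c_{v_1}$ and $c_{v_3}$ \emph{arbitrarily} and claim that for every $c_{y_4}\in L(y_4)$ the triangle can be colored so that some element of $L(y_1)\cap\{7,8\}$ avoids $c_{y_2}$. This fails: take $L(y_1)=\{3,4,5,8\}$, $L(y_2)=\{1,2,8\}$, $L(y_3)=\{1,2\}$, $L(y_4)=\{1,7,8\}$ and $c_{y_4}=1$; then $c_{y_3}$ is forced to equal $2$ and $c_{y_2}$ is forced to equal $8$, which is the unique element of $L(y_1)\cap\{7,8\}$. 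Since condition (i) quantifies over \emph{all} colorings of $S\cup\{v_1,v_3\}$ extending $\psi_0$, you cannot discard this $c_{y_4}$; you are pushed back to coloring $y_1$ from $\{1,\dotsc,6\}$, and then the arbitrary choice of $(c_{v_1},c_{v_3})$ can make $C_2$ uncolorable (for instance, if all five lists on $C_2$ equal $\{a,b,c\}$ and $y_1$ receives $a$, $v_1$ receives $b$, $v_3$ receives $c$, then $u_2$ has no available color). Your remaining sub-case ($L(y_1)\subseteq\{1,\dotsc,6\}$ with $L(v_1)=L(v_3)$) is left as ``delicate bookkeeping,'' so the proof is incomplete there as well.

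The paper avoids all of this with a different and much cleaner dichotomy: it cases on whether all five vertices of $C_2$ carry the same list. If not, choose $c\in L(y_1)\setminus L(y_2)$ (possible since $|L(y_1)|=4>3=|L(y_2)|$); the wheel $C_2+y_1$ is $L$-colorable with $y_1$ receiving $c$ (after deleting $c$ the lists on $C_2$ have size at least two and cannot all be the same $2$-set), and because $c\notin L(y_2)$ the triangle decouples completely, so (i) holds with $\psi_0$ taken to be the restriction of that wheel coloring to $\{v_1,v_3\}$ --- crucially, not an arbitrary pair. If all five lists agree, choose $c\in L(y_1)\setminus L(v_1)$; now $y_1$ decouples from $C_2$ instead, and (ii) holds. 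The two decoupling tricks --- sending $y_1$'s color outside $L(y_2)$, or outside the common list of $C_2$, and extracting $\psi_0$ from an already-constructed coloring rather than prescribing it in advance --- are exactly what your plan is missing, and incorporating them would replace both of your problematic cases.
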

\begin{proof}
    Let~$L$ be a half-list assignment for~$G_2$.  If not all vertices of~$C_2$
    have the same list, then choose a color $c\in L(y_1)\setminus L(y_2)$, and
    observe there exists an $L$-coloring of~$G_2[V(C_2)\cup \{y_1\}]$ such
    that~$y_1$ has color~$c$.  Let~$\psi_0$ be the restriction of this coloring
    to~$\{v_1,v_3\}$.  Clearly, every $L$-coloring of~$\{v_1,v_3,y_4\}$
    extending~$\psi_0$ extends to an $L$-coloring of~$G_2$, and thus~(i) holds.

If all the vertices of~$C_2$ have the same list (and hence in particular
    $L(v_1)=L(v_3)$), then let~$c$ be a color in $L(y_1)\setminus L(v_1)$.
    Observe that there exists an $L$-coloring of~$G_2[\{y_1,y_2,y_3,y_4\}]$
    such that~$y_1$ has color~$c$.  Let~$\psi_0$ be the restriction of this
    coloring to~$y_4$.  Again, every $L$-coloring of~$\{v_1,v_3,y_4\}$
    extending~$\psi_0$ extends to an $L$-coloring of~$G_2$, and thus~(ii) holds.

It remains to show that if~$\varphi$ is an $(L_2:2)$-coloring of~$G_2$ then
    $\varphi(y_4)\cap\{7,8\}\neq\varnothing$.  Suppose, on the contrary, that
    $\varphi(y_4)\cap\{7,8\}=\varnothing$. It follows that $\varphi(y_4)\cup
    \varphi(y_3)=\{1,2,3,4\}$, and hence $\varphi(y_2)=\{7,8\}$.  As a result,
    $\varphi(y_1)\subseteq \{1,\dotsc,6\}$ and,  by symmetry, we can assume
    that $\varphi(y_1)=\{5,6\}$.  This implies that
    $\varphi(v)\subseteq\{1,2,3,4\}$ for each~$v\in V(C_2)$. In particular,
    $\varphi|V(C_2)$ is a $(4:2)$-coloring of~$C_2$, which is a
    contradiction since the $5$-cycle $C_2$ has fractional chromatic
    number~$5/2$.
\end{proof}

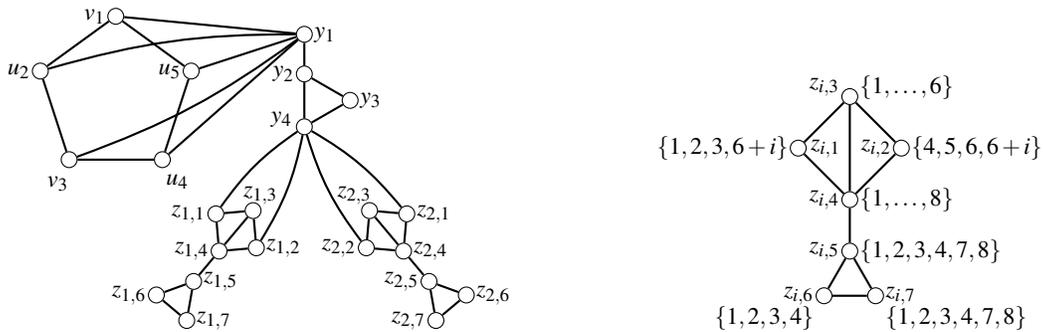
\begin{figure}[!ht]
    \begin{center}
 \begin{tikzpicture}[vertex/.style={circle, draw=black, fill=white, inner sep=0.5pt, minimum size=6pt},edge/.style={thick},font=\footnotesize]
 \foreach \i in {1,...,5}
    \draw (18+72*\i:\unit) node[vertex] (a\i) {};
\draw (18:2.5*\unit) node[vertex] (y1) {};
\begin{scope}[shift={(y1)}]
\begin{scope}[shift={(0,-1*\unit)}]
\path (90:\unit/2) node[vertex] (y2) {};   
\draw (y2)++(0,-2*\unit/3) node[vertex] (y4) {};
\draw (y4)++(30:\unit*2/3) node[vertex] (y3) {};
\end{scope}
\end{scope}
    \draw[edge] (a5)--(a1)--(a2)--(a3)--(a4)--(a5);
    \draw[edge] (y1) to[bend right=8] (a2);
    \draw[edge] (y1) to[bend left=8] (a3);
    \draw[edge] (a5)--(y1)--(a1);
    \draw[edge] (y1)--(a4);
\draw[edge] (y1)--(y2);
\draw[edge] (y2)--(y3)--(y4)--(y2);
\draw[right] (y1) node {$y_1$};
\draw[left] (y2) node {$y_2$};
\draw[right] (y3) node {$y_3$};
\draw (y4)++(-3mm,1mm) node {$y_4$};
\draw[left] (a1) node {$v_1$};
\draw[left] (a2) node {$u_2$};
\draw[left] (a5) node {$u_5$};
\draw (a4)++(-55:3.5mm) node {$u_4$};
\draw (a3)++(245:3.5mm) node {$v_3$};
\begin{scope}[xshift=1.6cm,yshift=-1.8cm,rotate=-40]
\draw (180:\unit/3) node[vertex] (z1) {};
\draw (0:\unit/3) node[vertex] (z2) {};
\draw (90:\unit/3) node[vertex] (z3) {};
\draw (270:\unit/3) node[vertex] (z4) {};
\draw (z4)++(0,-\unit/2) node[vertex] (z5) {};
\draw[edge] (z4)--(z1)--(z3)--(z2)--(z4)--(z3);
\draw (z5)++(-120:\unit/2) node[vertex] (z6) {};
\draw (z5)++(-60:\unit/2) node[vertex] (z7) {};
\end{scope}
\begin{scope}[xshift=3.6cm,yshift=-1.8cm,rotate=40]
\draw (180:\unit/3) node[vertex] (zz2) {};
\draw (0:\unit/3) node[vertex] (zz1) {};
\draw (90:\unit/3) node[vertex] (zz3) {};
\draw (270:\unit/3) node[vertex] (zz4) {};
\draw (zz4)++(0,-\unit/2) node[vertex] (zz5) {};
\draw[edge] (zz4)--(zz1)--(zz3)--(zz2)--(zz4)--(zz3);
\draw (zz5)++(-120:\unit/2) node[vertex] (zz7) {};
\draw (zz5)++(-60:\unit/2) node[vertex] (zz6) {};
\end{scope}
\draw[edge] (y4) to[bend right=10] (z1);
\draw[edge] (y4) to[bend left=10] (z2);
\draw[edge] (z4)--(z5)--(z6)--(z7)--(z5);
\draw[edge] (y4) to[bend right=10] (zz2);
\draw[edge] (y4) to[bend left=10] (zz1);
\draw[edge] (zz4)--(zz5)--(zz6)--(zz7)--(zz5);
\draw[left] (z1) node {$z_{1,1}$};
\draw[right] (zz1) node {$z_{2,1}$};
\draw[right] (z2) node {$z_{1,2}$};
\draw[left] (zz2) node {$z_{2,2}$};
\draw[left] (z4) node {$z_{1,4}$};
\draw[right] (zz4) node {$z_{2,4}$};
\draw[right] (z7) node {$z_{1,7}$};
\draw[left] (zz7) node {$z_{2,7}$};
\draw[left] (z6) node {$z_{1,6}$};
\draw[right] (z5) node {$z_{1,5}$};
\draw[right] (zz6) node {$z_{2,6}$};
\draw[left] (zz5) node {$z_{2,5}$};
\draw (z3)++(1mm,2mm) node {$z_{1,3}$};
\draw (zz3)++(-2mm,2mm) node {$z_{2,3}$};
\end{tikzpicture}
\hspace{1.5cm}
 \begin{tikzpicture}[vertex/.style={circle, draw=black, fill=white, inner sep=0.5pt, minimum size=6pt},edge/.style={thick},font=\footnotesize,scale=.65]
\draw (180:\unit) node[vertex] (z1) {};
\draw (0:\unit) node[vertex] (z2) {};
\draw (90:\unit) node[vertex] (z3) {};
\draw (270:\unit) node[vertex] (z4) {};
\draw (z4)++(0,-\unit) node[vertex] (z5) {};
\draw[edge] (z4)--(z1)--(z3)--(z2)--(z4)--(z3);
\draw (z5)++(-120:\unit) node[vertex] (z6) {};
\draw (z5)++(-60:\unit) node[vertex] (z7) {};
\draw[edge] (z4)--(z5)--(z6)--(z7)--(z5);
     \draw[right] (z1)++(.5mm,0mm) node {$z_{i,1}$};
\draw[left] (z1) node {$\{1,2,3,6+i\}$};
\draw[left] (z2) node {$z_{i,2}$};
\draw[right] (z2) node {$\{4,5,6,6+i\}$};
\draw[left] (z4) node {$z_{i,4}$};
\draw[right] (z4) node {$\{1,\dotsc,8\}$};
\draw[below right] (z7) node {$\{1,2,3,4,7,8\}$};
\draw[right] (z7) node {$z_{i,7}$};
\draw[left] (z6) node {$z_{i,6}$};
\draw[below left] (z6) node {$\{1,2,3,4\}$};
\draw[right] (z5) node {$\{1,2,3,4,7,8\}$};
\draw[left] (z5) node {$z_{i,5}$};
\draw[left] (z3)++(0mm,2mm) node {$z_{i,3}$};
\draw[right] (z3)++(0mm,2mm) node {$\{1,\dotsc,6\}$};
\end{tikzpicture}
    \caption{The graph~$G_3$ from Lemma~\ref{lemma-force78} is depicted on the left; the gadget~$(G_3,L_3)$
    is obtained from~$(G_2,L_2)$ by adding to~$G_3$ a disjoint copy of the
    graph depicted on the right (with the corresponding lists) for
    each~$i\in\{1,2\}$, and joining~$y_4$ to each
    of~$z_{1,1}$,~$z_{1,2}$,~$z_{2,1}$ and~$z_{2,2}$.}\label{fig-force78}
\end{center}
\end{figure}

\begin{lemma}\label{lemma-force78}
Consider the gadget~$(G_3,L_3)$, obtained from the gadget~$(G_2,L_2)$ of
    Lemma~\ref{lemma-stagad} as follows (see Figure~\ref{fig-force78} for an
    illustration of~$G_3$).  The graph~$G_3$ consists of~$G_2$ and
    for~$i\in\{1,2\}$, the vertices~$z_{i,1}, \dotsc, z_{i,7}$; the
    edges~$y_4z_{i,1}$ and~$y_4z_{i,2}$; the edge~$z_{i,j}z_{i,k}$ for
    every~$j$ and every~$k$ such that $1\le j<k\le 4$ and~$(j,k)\neq (1,2)$;
    the edges of the triangle~$z_{i,5}z_{i,6}z_{i,7}$ and the
    edge~$z_{i,4}z_{i,5}$.  Let~$L_3(v)=L_2(v)$ for~$v\in V(G_2)$, and
    for~$i\in\{1,2\}$ let~$L_3(z_{i,1})=\{1,2,3,6+i\}$,
    $L_3(z_{i,2})=\{4,5,6,6+i\}$, $L_3(z_{i,3})=\{1,\dotsc,6\}$,
    $L_3(z_{i,4})=\{1,\dotsc,8\}$, $L_3(z_{i,5})=L_3(z_{i,7})=\{1,2,3,4,7,8\}$
    and $L_3(z_{i,6})=\{1,2,3,4\}$.  The gadget~$(G_3,L_3)$ is
$(v_1,v_3,\{z_{1,7},z_{2,7}\})$-relaxed, and~$\varphi(z_{1,7})=\{7,8\}$ or
$\varphi(z_{2,7})=\{7,8\}$ for every $(L_3:2)$-coloring~$\varphi$ of~$G_3$.
\end{lemma}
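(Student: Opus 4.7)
The proof has two parts.

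For the $(L_3:2)$-coloring forcing: the restriction of $\varphi$ to $V(G_2)$ is an $(L_2:2)$-coloring, so by Lemma~\ref{lemma-stagad}, $\varphi(y_4) \cap \{7, 8\} \neq \varnothing$. By symmetry, I may assume $7 \in \varphi(y_4)$ and prove $\varphi(z_{1,7}) = \{7, 8\}$. Since $y_4 z_{1,1}, y_4 z_{1,2}$ are edges, $\varphi(z_{1,1}) \subseteq L_3(z_{1,1}) \setminus \{7\} = \{1,2,3\}$ and $\varphi(z_{1,2}) \subseteq \{4,5,6\}$, yielding $|\varphi(z_{1,1}) \cup \varphi(z_{1,2})| = 4$. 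The $K_4$-minus-edge structure on $\{z_{1,1}, z_{1,2}, z_{1,3}, z_{1,4}\}$ then forces $\varphi(z_{1,3})$ to equal the two remaining colors in $\{1, \dotsc, 6\}$ and $\varphi(z_{1,4}) = \{7, 8\}$. Consequently, $\varphi(z_{1,5})$ and $\varphi(z_{1,6})$ are disjoint $2$-subsets of $\{1,2,3,4\}$, so they partition $\{1,2,3,4\}$, which leaves $\varphi(z_{1,7}) = \{7,8\}$.

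For the relaxed property, given a half-list assignment $L$ for $G_3$, apply Lemma~\ref{lemma-stagad} to the restriction of $L$ to $V(G_2)$, obtaining either~(i') a coloring $\psi_0^{(2)}$ of $\{v_1, v_3\}$, or~(ii') $L(v_1) = L(v_3)$ with a coloring $\psi_0^{(2)}$ of $\{y_4\}$. If~(i') holds, set $\psi_0 := \psi_0^{(2)}$ on $\{v_1, v_3\}$, aiming at case~(i) for $G_3$. If~(ii') holds, define $\psi_0(z_{i,7}) \in L(z_{i,7}) \setminus L(z_{i,6})$ for each $i \in \{1,2\}$, non-empty since $|L(z_{i,7})|-|L(z_{i,6})|=1$, aiming at case~(ii). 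Given any extension $\varphi_0$ of $\psi_0$ to $\{v_1, v_3, z_{1,7}, z_{2,7}\}$, pick $\varphi(y_4) \in L(y_4) \cap \{1,2,3,4\}$ in case~(i') (non-empty because $|L(y_4)| = 3$ and $L(y_4) \subseteq \{1,2,3,4,7,8\}$), or $\varphi(y_4) := \psi_0^{(2)}(y_4)$ in case~(ii'), and invoke Lemma~\ref{lemma-stagad} to extend to an $L$-coloring of $G_2$.

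It remains to extend to each copy by coloring $z_{i,1}, \dotsc, z_{i,6}$ given the external colors $\varphi(y_4)$ and $\varphi(z_{i,7})$. The block $K_4 - e$ on $\{z_{i,1}, z_{i,2}, z_{i,3}, z_{i,4}\}$ is not a Gallai tree, so Rubin's degree-choosability theorem handles the case $\varphi(y_4) \notin L(z_{i,1}) \cup L(z_{i,2})$. Since $L(z_{i,1}) \cap L(z_{i,2}) \subseteq \{6+i\}$, if $\varphi(y_4) \neq 6+i$ then $\varphi(y_4)$ belongs to at most one of $L(z_{i,1}), L(z_{i,2})$; after assigning the forced color, the reduced graph is a triangle on three of the $z_{i,j}$ with a pendant path $z_{i,4} z_{i,5} z_{i,6}$, and it admits an $L$-coloring because $|L(z_{i,4})| = 4$ leaves at least three options for $\varphi(z_{i,4})$ (enough to resolve the $K_3$) and the choice $\psi_0(z_{i,7}) \notin L(z_{i,6})$ in case~(ii') keeps $z_{i,6}$ colorable. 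The hard part will be the sub-case~(ii') where $\psi_0^{(2)}(y_4) = 6+i$ for some $i$: both $z_{i,1}$ and $z_{i,2}$ are forced, and one must $L$-color the resulting path $z_{i,3} z_{i,4} z_{i,5} z_{i,6}$ via a direct argument exploiting the sizes of $L(z_{i,4})$ and $L(z_{i,5})$ together with the choice of $\psi_0(z_{i,7})$.
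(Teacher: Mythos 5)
Your argument for the second conclusion (that $\varphi(z_{1,7})=\{7,8\}$ or $\varphi(z_{2,7})=\{7,8\}$ for every $(L_3:2)$-coloring) is correct and is the same as the paper's. The relaxed-property part, however, has genuine gaps. First, a definitional point that undermines several of your steps: a half-list assignment is only required to satisfy $|L(v)|=|L_3(v)|/2$; it need \emph{not} satisfy $L(v)\subseteq L_3(v)$. So the claims $L(y_4)\subseteq\{1,2,3,4,7,8\}$ (hence $L(y_4)\cap\{1,2,3,4\}\neq\varnothing$) and $L(z_{i,1})\cap L(z_{i,2})\subseteq\{6+i\}$ are unjustified; indeed, in the final construction the half-lists come from arbitrary $4$-lists and bear no relation to $L_3$. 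The paper's argument deliberately uses only the list \emph{sizes}.

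Second, even granting $L(v)\subseteq L_3(v)$, your case~(i) recipe fails, because the color of $y_4$ must be coordinated with the lists of $z_{i,1},z_{i,2},z_{i,3}$, not merely kept out of $\{7,8\}$. Take $L(y_4)=\{2,7,8\}$, $L(z_{1,1})=\{1,2\}$, $L(z_{1,2})=\{4,5\}$, $L(z_{1,3})=\{1,4,5\}$, $L(z_{1,4})=\{1,3,4,5\}$, $L(z_{1,5})=\{1,3,4\}$, $L(z_{1,6})=\{1,4\}$, and $4\in L(z_{1,7})$. Your rule forces $\varphi(y_4)=2$, and the adversarial choice $\varphi(z_{1,7})=4$ (which property~(i) must tolerate, since $z_{1,7}\in S$) then forces $z_{1,1}\mapsto 1$, $z_{1,6}\mapsto 1$, $z_{1,5}\mapsto 3$, leaving the triangle $z_{1,2}z_{1,3}z_{1,4}$ with all three effective lists equal to $\{4,5\}$ --- uncolorable. (With $\varphi(y_4)=7$ the copy \emph{is} completable, so the lemma is not at fault; your choice of $\varphi(y_4)$ is.) The paper fixes this by first selecting, for each $i$, a color $c_i$ that is either common to $L(z_{i,1})$ and $L(z_{i,2})$ or lies outside $L(z_{i,3})$ (possible by a size count), and then taking $\varphi(y_4)\in L(y_4)\setminus\{c_1,c_2\}$; it also chooses the color $c'_i$ of $z_{i,4}$ so that $L(z_{i,5})\setminus\{c'_i\}\neq L(z_{i,6})$, which is exactly what is needed for the triangle $z_{i,5}z_{i,6}z_{i,7}$ to survive an arbitrary precoloring of $z_{i,7}$ --- another point your write-up only addresses in case~(ii), where $z_{i,7}$ is not adversarial. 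Finally, the sub-case you explicitly leave open (``the hard part'') is actually the easy one: in case~(ii) the colors of $z_{1,7},z_{2,7}$ are yours to choose, and a straight greedy coloring of $z_{i,1},\dotsc,z_{i,7}$ in order succeeds by the list sizes alone, which is what the paper does.
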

\begin{proof} Let~$L$ be a half-list assignment for~$G_3$.
    The gadget~$(G_2,L_3|V(G_2))$ is
    $(v_1,v_3,\{y_4\})$-relaxed by Lemma~\ref{lemma-stagad}.
    Suppose first that~(i) holds for the restriction of~$L$ to~$G_2$
    (with~$S=\{y_4\}$), and let~$\psi_0$ be the corresponding $L$-coloring
    of~$\{v_1,v_3\}$.  For~$i\in\{1,2\}$, if $L(z_{i,1})\cap
    L(z_{i,2})\neq\varnothing$, then let~$c_i$ be a color in~$L(z_{i,1})\cap
    L(z_{i,2})$.  Otherwise, $|L(z_{i,1})\cup
    L(z_{i,2})|=4>|L(z_{i,3})|$, and thus we can choose a color $c_i\in
    (L(z_{i,1})\cup L(z_{i,2}))\setminus L(z_{i,3})$.  Let~$c$ be a color in
    $L(y_4)\setminus \{c_1,c_2\}$.  By~(i) for~$G_2$, we know that~$\psi_0$
    extends to an $L$-coloring~$\psi$ of~$G_2$ such that $\psi(y_4)=c$.  If
    $L(z_{i,1})\cap L(z_{i,2})\neq\varnothing$, then color both~$z_{i,1}$
    and~$z_{i,2}$ by~$c_i$, otherwise color one of them by~$c_i$ and the other
    one by an arbitrary color from its list that is different from~$c$.  There
    are at least two colors in~$L(z_{i,4})$ distinct from the colors
    of~$z_{i,1}$ and~$z_{i,2}$, choose such a color~$c'_i$ so that
    $L(z_{i,5})\setminus \{c'_i\}\neq L(z_{i,6})$.  Color~$z_{i,4}$ by~$c'_i$
    and extend the coloring to~$z_{i,3}$, which is possible by the choice
    of~$c_i$.  Observe that any $L$-coloring of~$z_{i,7}$ extends to an
    $L$-coloring of the triangle~$z_{i,5}z_{i,6}z_{i,7}$ where the color
    of~$z_{i,5}$ is not~$c'_i$. We conclude that~$(G_3,L_3)$ with the half-list
    assignment~$L$ satisfies~(i).

    Suppose next that~(ii) holds for the restriction of~$L$ to~$G_2$ (with~$S=\{y_4\}$), and
    let~$\psi'_0$ be the corresponding $L$-coloring of~$y_4$.  For~$i\in\{1,2\}$,
    greedily extend $\psi'_0$ to an $L$-coloring of~$z_{i,1},
    \dotsc, z_{i,7}$ in order, and let~$\psi_0$ be the restriction of the
    resulting coloring to~$\{z_{1,7},z_{2,7}\}$.  Observe that $(G_3,L_3)$ with
    the half-list assignment~$L$ satisfies~(ii).

    Finally, let~$\varphi$ be an~$(L_3:2)$-coloring of~$G_3$.
    Lemma~\ref{lemma-stagad} implies that
    $\varphi(y_4)\cap\{7,8\}\neq\varnothing$. By symmetry, we can assume that
    $7\in\varphi(y_4)$.  It follows that $\varphi(z_{1,1})\subset \{1,2,3\}$
    and~$\varphi(z_{1,2})\subset \{4,5,6\}$, and thus $\varphi(z_{1,1})\cup
    \varphi(z_{1,2})\cup \varphi(z_{1,3})=\{1,\dotsc,6\}$.  Consequently,
    $\varphi(z_{1,4})=\{7,8\}$, and~$\varphi(z_{1,5})$ is a subset
    of~$\{1,2,3,4\}$.  This yields that $\varphi(z_{1,5})\cup
    \varphi(z_{1,6})=\{1,2,3,4\}$, and therefore~$\varphi(z_{1,7})=\{7,8\}$.
\end{proof}

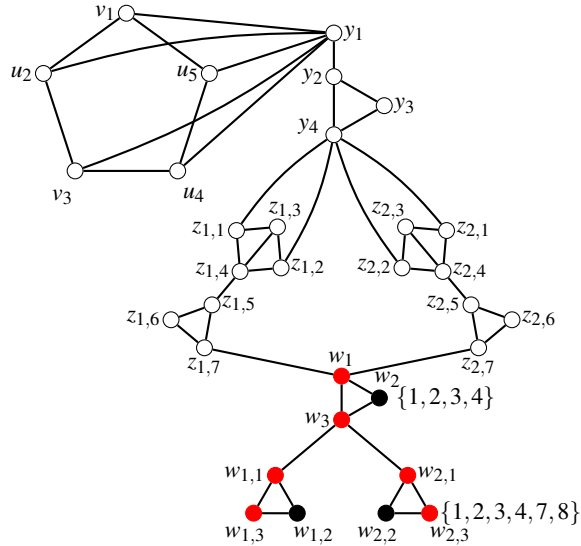
\begin{figure}[!ht]
    \begin{center}
 \begin{tikzpicture}[vertex/.style={circle, draw=black, fill=white, inner sep=0.5pt, minimum size=6pt},edge/.style={thick},scale=1.1,font=\footnotesize]
 \foreach \i in {1,...,5}
    \draw (18+72*\i:\unit) node[vertex] (a\i) {};
\draw (18:2.5*\unit) node[vertex] (y1) {};
\begin{scope}[shift={(y1)}]
\begin{scope}[shift={(0,-1*\unit)}]
\path (90:\unit/2) node[vertex] (y2) {};   
\draw (y2)++(0,-2*\unit/3) node[vertex] (y4) {};
\draw (y4)++(30:\unit*2/3) node[vertex] (y3) {};
\end{scope}
\end{scope}
    \draw[edge] (a5)--(a1)--(a2)--(a3)--(a4)--(a5);
    \draw[edge] (y1) to[bend right=8] (a2);
    \draw[edge] (y1) to[bend left=8] (a3);
    \draw[edge] (a5)--(y1)--(a1);
    \draw[edge] (y1)--(a4);
\draw[edge] (y1)--(y2);
\draw[edge] (y2)--(y3)--(y4)--(y2);
\draw[right] (y1) node {$y_1$};
\draw[left] (y2) node {$y_2$};
\draw[right] (y3) node {$y_3$};
\draw (y4)++(-3mm,1mm) node {$y_4$};
\draw[left] (a1) node {$v_1$};
\draw[left] (a2) node {$u_2$};
\draw[left] (a5) node {$u_5$};
\draw (a4)++(-55:3.5mm) node {$u_4$};
\draw (a3)++(245:3.5mm) node {$v_3$};
\begin{scope}[xshift=1.6cm,yshift=-1.8cm,rotate=-40]
\draw (180:\unit/3) node[vertex] (z1) {};
\draw (0:\unit/3) node[vertex] (z2) {};
\draw (90:\unit/3) node[vertex] (z3) {};
\draw (270:\unit/3) node[vertex] (z4) {};
\draw (z4)++(0,-\unit/2) node[vertex] (z5) {};
\draw[edge] (z4)--(z1)--(z3)--(z2)--(z4)--(z3);
\draw (z5)++(-120:\unit/2) node[vertex] (z6) {};
\draw (z5)++(-60:\unit/2) node[vertex] (z7) {};
\end{scope}
\begin{scope}[xshift=3.6cm,yshift=-1.8cm,rotate=40]
\draw (180:\unit/3) node[vertex] (zz2) {};
\draw (0:\unit/3) node[vertex] (zz1) {};
\draw (90:\unit/3) node[vertex] (zz3) {};
\draw (270:\unit/3) node[vertex] (zz4) {};
\draw (zz4)++(0,-\unit/2) node[vertex] (zz5) {};
\draw[edge] (zz4)--(zz1)--(zz3)--(zz2)--(zz4)--(zz3);
\draw (zz5)++(-120:\unit/2) node[vertex] (zz7) {};
\draw (zz5)++(-60:\unit/2) node[vertex] (zz6) {};
\end{scope}
\draw[edge] (y4) to[bend right=10] (z1);
\draw[edge] (y4) to[bend left=10] (z2);
\draw[edge] (z4)--(z5)--(z6)--(z7)--(z5);
\draw[edge] (y4) to[bend right=10] (zz2);
\draw[edge] (y4) to[bend left=10] (zz1);
\draw[edge] (zz4)--(zz5)--(zz6)--(zz7)--(zz5);
\draw[left] (z1) node {$z_{1,1}$};
\draw[right] (zz1) node {$z_{2,1}$};
\draw[right] (z2) node {$z_{1,2}$};
\draw[left] (zz2) node {$z_{2,2}$};
\draw[left] (z4) node {$z_{1,4}$};
\draw[right] (zz4) node {$z_{2,4}$};
\draw[below] (z7) node {$z_{1,7}$};
\draw[below] (zz7) node {$z_{2,7}$};
\draw[right] (z5) node {$z_{1,5}$};
\draw[left] (z6) node {$z_{1,6}$};
\draw[left] (zz5) node {$z_{2,5}$};
\draw[right] (zz6) node {$z_{2,6}$};
\draw (z3)++(1mm,2mm) node {$z_{1,3}$};
\draw (zz3)++(-2mm,2mm) node {$z_{2,3}$};
\path (z7)--(zz7) node[vertex,midway,below=.25*\unit,red] (w1) {};
\draw (w1)++(0,-\unit/2) node[vertex,red] (w3) {};
\draw (w3)++(30:\unit/2) node[vertex,black] (w2) {};
\path (w1)++(-8mm,-12mm) node[vertex,red] (w11) {};
\path (w1)++(8mm,-12mm) node[vertex,red] (w21) {};
\path (w11)++(60:-\unit/2) node[vertex,red] (w13) {};
\path (w13)++(0:\unit/2) node[vertex,black] (w12) {};
\path (w21)++(60:-\unit/2) node[vertex,black] (w22) {};
\path (w22)++(0:\unit/2) node[vertex,red] (w23) {};`
\draw[edge] (z7)--(w1)--(zz7);
\draw[edge] (w2)--(w1)--(w3)--(w2);
\draw[edge] (w12)--(w13)--(w11)--(w12);
\draw[edge] (w11)--(w3)--(w21);
\draw[edge] (w22)--(w23)--(w21)--(w22);
\draw[above] (w1) node {$w_1$};
\draw (w2)++(1mm,2.25mm) node {$w_2$};
\draw[left] (w3) node {$w_3$};
\draw[left] (w11) node {$w_{1,1}$};
\draw[right] (w21) node {$w_{2,1}$};
\draw (w12)++(2mm,-2.75mm) node {$w_{1,2}$};
\draw (w13)++(-1mm,-2.75mm) node {$w_{1,3}$};
\draw (w23)++(2.5mm,-2.75mm) node {$w_{2,3}$};
\draw (w22)++(-1mm,-2.75mm) node {$w_{2,2}$};
\draw[right] (w23) node {$\{1,2,3,4,7,8\}$};
\draw (w2)++(8mm,0mm) node {$\{1,2,3,4\}$};
\end{tikzpicture}
    \end{center}
    \caption{The graph~$G_4$ from Lemma~\ref{lemma-force78exact}: the filled vertices are those added
    to~$G_3$ to form~$G_4$; the red ones have list~$\{1,2,3,4,7,8\}$ while the black ones have list~$\{1,2,3,4\}$.}\label{fig-force78exact}
\end{figure}

\begin{lemma}\label{lemma-force78exact}
    Consider the gadget~$(G_4,L_4)$ obtained from the gadget~$(G_3,L_3)$ of Lemma~\ref{lemma-force78} as follows (see Figure~\ref{fig-force78exact}
    for an illustration of~$G_4$).
The graph~$G_4$ consists of~$G_3$; the three triangles~$w_1w_2w_3$ and~$w_{i,1}w_{i,2}w_{i,3}$ for~$i\in\{1,2\}$;
and the edges~$z_{1,7}w_1$, $z_{2,7}w_1$, $w_3w_{1,1}$ and~$w_3w_{2,1}$.
Let~$L_4(v)=L_3(v)$ for~$v\in V(G_3)$, and for~$i\in\{1,2\}$, let~$L_4(w_1)=L_4(w_3)=L_4(w_{i,1})=L_4(w_{i,3})=\{1,2,3,4,7,8\}$
and~$L_4(w_2)=L_4(w_{i,2})=\{1,2,3,4\}$.
The gadget is $(v_1,v_3,\{w_{1,3},w_{2,3}\})$-relaxed, and~$\varphi(w_{1,3})=\varphi(w_{2,3})=\{7,8\}$ for
every $(L_4:2)$-coloring~$\varphi$ of~$G_4$.
\end{lemma}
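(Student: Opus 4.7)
My plan is to reduce everything to Lemma~\ref{lemma-force78} for $(G_3,L_3)$ by propagating colorings through the newly added $w$-vertices.

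For the forcing statement, let $\varphi$ be an $(L_4:2)$-coloring of $G_4$. By Lemma~\ref{lemma-force78}, either $\varphi(z_{1,7})=\{7,8\}$ or $\varphi(z_{2,7})=\{7,8\}$. Since $w_1$ is adjacent to both $z_{1,7}$ and $z_{2,7}$, the set $\varphi(w_1)$ is disjoint from $\{7,8\}$, so $\varphi(w_1)\subseteq\{1,2,3,4\}$. Because $L_4(w_2)=\{1,2,3,4\}$ and $w_1w_2$ is an edge, $\varphi(w_1)\cup\varphi(w_2)=\{1,2,3,4\}$, which in turn forces $\varphi(w_3)=\{7,8\}$. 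Repeating the same argument on each triangle $w_{i,1}w_{i,2}w_{i,3}$, using the edge $w_3w_{i,1}$ in place of $z_{j,7}w_1$, yields $\varphi(w_{i,3})=\{7,8\}$ for both $i\in\{1,2\}$.

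For the relaxed property, given a half-list assignment $L$ on $G_4$ I apply Lemma~\ref{lemma-force78} to $(G_3,L|_{V(G_3)})$ and split into two subcases. \emph{Subcase~A:} condition~(i) holds for $G_3$ with witness $\psi_0$ on $\{v_1,v_3\}$; I will argue the same $\psi_0$ witnesses~(i) for $G_4$. Given any extension $\psi$ of $\psi_0$ to $\{v_1,v_3,w_{1,3},w_{2,3}\}$, define for each $i\in\{1,2\}$ the set $G_i$ of colors $a\in L(w_{i,1})$ with $a\neq\psi(w_{i,3})$ and $L(w_{i,2})\setminus\{\psi(w_{i,3}),a\}\neq\emptyset$; a short case analysis (on whether $\psi(w_{i,3})\in L(w_{i,2})$) shows $|G_i|\geq 1$, with equality only when $G_i$ is a specific singleton $\{a_i\}$. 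Since $|L(w_3)|=3$, pick $\psi(w_3)\in L(w_3)\setminus\{a_1,a_2\}$; then choose $\psi(w_{i,1})\in G_i\setminus\{\psi(w_3)\}$ and $\psi(w_{i,2})\in L(w_{i,2})\setminus\{\psi(w_{i,3}),\psi(w_{i,1})\}$; extend properly over the edge $w_1w_2$ (easy because $|L(w_1)|=3$ and $|L(w_2)|=2$); finally pick $\psi(z_{j,7})\in L(z_{j,7})\setminus\{\psi(w_1)\}$ for $j\in\{1,2\}$ and invoke~(i) for $G_3$. \emph{Subcase~B:} condition~(ii) holds for $G_3$, so $L(v_1)=L(v_3)$ and there is a coloring $\psi'_0$ of $\{z_{1,7},z_{2,7}\}$ such that every extension to $\{v_1,v_3,z_{1,7},z_{2,7}\}$ extends to~$G_3$. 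I extend $\psi'_0$ greedily first through the triangle $w_1w_2w_3$ (feasible because $|L(w_1)|=3$ and only two colors must be avoided at $w_1$) and then through each triangle $w_{i,1}w_{i,2}w_{i,3}$; taking $\psi_0$ to be the restriction to $\{w_{1,3},w_{2,3}\}$ verifies~(ii) for $G_4$ by combining with~(ii) for $G_3$.

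The main obstacle is the feasibility check in Subcase~A: the size-$2$ lists at the $w_{i,2}$'s make each triangle $w_{i,1}w_{i,2}w_{i,3}$ fragile under the external constraint $\psi(w_{i,1})\neq\psi(w_3)$. The key idea is to fix $\psi(w_3)$ \emph{before} $\psi(w_{i,1})$, using the slack provided by $|L(w_3)|=3$ to dodge the singleton obstructions $a_i$; the remaining extensions are then routine.
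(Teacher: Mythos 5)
Your proof is correct and follows essentially the same route as the paper: reduce to Lemma~\ref{lemma-force78} via the (i)/(ii) dichotomy and propagate through the three new triangles, with the forcing argument identical. The only (harmless) difference is in case~(i), where you color the $w$-vertices backward from the prescribed colors of $w_{1,3},w_{2,3}$ and pick the colors of $z_{1,7},z_{2,7}$ last (exploiting the freedom condition~(i) gives on those vertices), whereas the paper chooses $c_1,c_2,c_3$ up front under slightly more delicate conditions; your ordering avoids that bookkeeping.
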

\begin{proof}
Let~$L$ be a half-list assignment for~$G_4$.  The gadget~$(G_3,L_4|V(G_3))$ is
    $(v_1,v_3,\{z_{1,7},z_{2,7}\})$-relaxed by Lemma~\ref{lemma-force78}.
    Suppose first that~(i) holds for the restriction of~$L$ to~$G_3$
    (with~$S=\{z_{1,7},z_{2,7}\}$), and let~$\psi_0$ be the corresponding
    $L$-coloring of~$\{v_1,v_3\}$.  Choose a color $c_1\in L(z_{1,7})$ so that
    $L(w_1)\setminus\{c_1\}\neq L(w_2)$.  If $c_1\in L(w_1)$, then choose
    $c_2\in L(z_{2,7})\setminus (L(w_1)\setminus\{c_1\})$, otherwise choose
    $c_2\in L(z_{2,7})$ so that $L(w_1)\setminus\{c_2\}\neq L(w_2)$.  Choose a
    color $c_3\in L(w_3)$ so that $L(w_{i,1})\setminus\{c_3\}\neq L(w_{i,2})$
    for $i\in\{1,2\}$.  By~(i) for $G_3$, there exists an $L$-coloring of~$G_3$
    extending~$\psi_0$ and assigning~$c_i$ to~$z_{i,7}$ for each~$i\in\{1,2\}$.
    Color~$w_3$ by~$c_3$ and observe that the $L$-coloring can be extended to~$w_1$
    and~$w_2$ thanks to the choice of~$c_1$ and~$c_2$. Moreover, the choice of~$c_3$
    ensures that for each~$i\in\{1,2\}$, we can color~$w_{i,3}$ with any
    color in~$L(w_{i,3})$ and further extend the coloring to~$w_{i,1}$
    and~$w_{i,2}$.  We conclude that $(G_4,L_4)$ with the half-list
    assignment~$L$ satisfies~(i).

Suppose next that~(ii) holds for the restriction of~$L$ to~$G_4$
    (with~$S=\{z_{1,7},z_{2,7}\}$), and let~$\psi'_0$ be the corresponding
    $L$-coloring of~$\{z_{1,7},z_{2,7}\}$.  Greedily extend $\psi'_0$ to an
    $L$-coloring of~$w_1$, $w_2$, $w_3$, and~$w_{i,1}$, $w_{i,2}$, $w_{i,3}$
    for $i\in\{1,2\}$ in order, and let~$\psi_0$ be the restriction of the
    resulting coloring to~$\{w_{1,3},w_{2,3}\}$.  Observe that $(G_4,L_4)$ with
    the half-list assignment~$L$ satisfies~(ii).

Finally, let~$\varphi$ be an~$(L_4:2)$-coloring of~$G_4$.  By
    Lemma~\ref{lemma-force78} and by symmetry, we can assume that
    $\varphi(z_{1,7})=\{7,8\}$.  Consequently, $\varphi(w_1)\subset
    \{1,2,3,4\}$, and thus $\varphi(w_1)\cup \varphi(w_2)=\{1,2,3,4\}$, which
    yields that $\varphi(w_3)=\{7,8\}$.  We conclude analogously that
    $\varphi(w_{1,3})=\{7,8\}=\varphi(w_{2,3})$.
\end{proof}

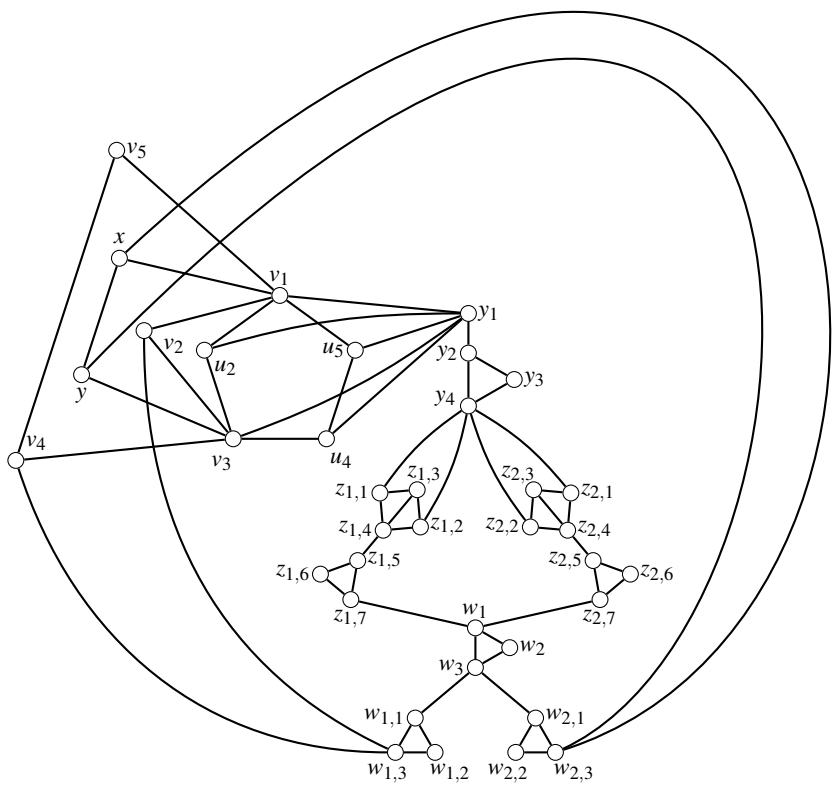
\begin{figure}[!ht]
    \begin{center}
 \begin{tikzpicture}[vertex/.style={circle, draw=black, fill=white, inner sep=0.5pt, minimum size=6pt},edge/.style={thick},font=\footnotesize]
\clip (-3.8,-5.8) rectangle (7.5,5);
\foreach \i in {1,...,5}
    \draw (18+72*\i:\unit) node[vertex] (a\i) {};
\draw (18:2.5*\unit) node[vertex] (y1) {};
\begin{scope}[shift={(y1)}]
\begin{scope}[shift={(0,-1*\unit)}]
\path (90:\unit/2) node[vertex] (y2) {};   
\draw (y2)++(0,-2*\unit/3) node[vertex] (y4) {};
\draw (y4)++(30:\unit*2/3) node[vertex] (y3) {};
\end{scope}
\end{scope}
    \draw[edge] (a5)--(a1)--(a2)--(a3)--(a4)--(a5);
    \draw[edge] (y1) to[bend right=8] (a2);
    \draw[edge] (y1) to[bend left=8] (a3);
    \draw[edge] (a5)--(y1)--(a1);
    \draw[edge] (y1)--(a4);
\draw[edge] (y1)--(y2);
\draw[edge] (y2)--(y3)--(y4)--(y2);
\draw[right] (y1) node {$y_1$};
\draw[left] (y2) node {$y_2$};
\draw[right] (y3) node {$y_3$};
\draw (y4)++(-3mm,1mm) node {$y_4$};
\draw[above] (a1) node {$v_1$};
\draw[below right] (a2) node {$u_2$};
\draw[left] (a5) node {$u_5$};
\draw (a4)++(-55:3.5mm) node {$u_4$};
\draw (a3)++(245:3.5mm) node {$v_3$};
\begin{scope}[xshift=1.6cm,yshift=-1.8cm,rotate=-40]
\draw (180:\unit/3) node[vertex] (z1) {};
\draw (0:\unit/3) node[vertex] (z2) {};
\draw (90:\unit/3) node[vertex] (z3) {};
\draw (270:\unit/3) node[vertex] (z4) {};
\draw (z4)++(0,-\unit/2) node[vertex] (z5) {};
\draw[edge] (z4)--(z1)--(z3)--(z2)--(z4)--(z3);
\draw (z5)++(-120:\unit/2) node[vertex] (z6) {};
\draw (z5)++(-60:\unit/2) node[vertex] (z7) {};
\end{scope}
\begin{scope}[xshift=3.6cm,yshift=-1.8cm,rotate=40]
\draw (180:\unit/3) node[vertex] (zz2) {};
\draw (0:\unit/3) node[vertex] (zz1) {};
\draw (90:\unit/3) node[vertex] (zz3) {};
\draw (270:\unit/3) node[vertex] (zz4) {};
\draw (zz4)++(0,-\unit/2) node[vertex] (zz5) {};
\draw[edge] (zz4)--(zz1)--(zz3)--(zz2)--(zz4)--(zz3);
\draw (zz5)++(-120:\unit/2) node[vertex] (zz7) {};
\draw (zz5)++(-60:\unit/2) node[vertex] (zz6) {};
\end{scope}
\draw[edge] (y4) to[bend right=10] (z1);
\draw[edge] (y4) to[bend left=10] (z2);
\draw[edge] (z4)--(z5)--(z6)--(z7)--(z5);
\draw[edge] (y4) to[bend right=10] (zz2);
\draw[edge] (y4) to[bend left=10] (zz1);
\draw[edge] (zz4)--(zz5)--(zz6)--(zz7)--(zz5);
\draw[left] (z1) node {$z_{1,1}$};
\draw[right] (zz1) node {$z_{2,1}$};
\draw[right] (z2) node {$z_{1,2}$};
\draw[left] (zz2) node {$z_{2,2}$};
\draw[left] (z4) node {$z_{1,4}$};
\draw[right] (zz4) node {$z_{2,4}$};
\draw[below] (z7) node {$z_{1,7}$};
\draw[below] (zz7) node {$z_{2,7}$};
\draw[right] (z5) node {$z_{1,5}$};
\draw[left] (z6) node {$z_{1,6}$};
\draw[left] (zz5) node {$z_{2,5}$};
\draw[right] (zz6) node {$z_{2,6}$};
\draw (z3)++(1mm,2mm) node {$z_{1,3}$};
\draw (zz3)++(-2mm,2mm) node {$z_{2,3}$};
\path (z7)--(zz7) node[vertex,midway,below=.25*\unit] (w1) {};
\draw (w1)++(0,-\unit/2) node[vertex] (w3) {};
\draw (w3)++(30:\unit/2) node[vertex] (w2) {};
\path (w1)++(-8mm,-12mm) node[vertex] (w11) {};
\path (w1)++(8mm,-12mm) node[vertex] (w21) {};
\path (w11)++(60:-\unit/2) node[vertex] (w13) {};
\path (w13)++(0:\unit/2) node[vertex] (w12) {};
\path (w21)++(60:-\unit/2) node[vertex] (w22) {};
\path (w22)++(0:\unit/2) node[vertex] (w23) {};`
\draw[edge] (z7)--(w1)--(zz7);
\draw[edge] (w2)--(w1)--(w3)--(w2);
\draw[edge] (w12)--(w13)--(w11)--(w12);
\draw[edge] (w11)--(w3)--(w21);
\draw[edge] (w22)--(w23)--(w21)--(w22);
\draw[above] (w1) node {$w_1$};
\draw[right] (w2) node {$w_2$};
\draw[left] (w3) node {$w_3$};
\draw[left] (w11) node {$w_{1,1}$};
\draw (w12)++(2mm,-2.75mm) node {$w_{1,2}$};
\draw (w13)++(-1mm,-2.75mm) node {$w_{1,3}$};
\draw (w23)++(2.5mm,-2.75mm) node {$w_{2,3}$};
\draw (w22)++(-1mm,-2.75mm) node {$w_{2,2}$};
\draw[right] (w21) node {$w_{2,1}$};
\draw (126:3.5*\unit) node[vertex] (v5) {};
\draw (144:2.5*\unit) node[vertex] (x) {};
\draw (162:1.8*\unit) node[vertex] (v2) {};
\draw (180:2.5*\unit) node[vertex] (y) {};
\draw (198:3.5*\unit) node[vertex] (v4) {};
\draw[edge] (a1)--(v2)--(a3)--(v4)--(v5)--(a1);
\draw[edge] (a1)--(x)--(y)--(a3);
\draw (v2)++(4mm,-2mm) node {$v_2$};
\draw (x)++(0,2.75mm) node {$x$};
\draw (y)++(0,-2.75mm) node {$y$};
\draw[right] (v5) node {$v_5$};
\draw[above right] (v4) node {$v_4$};
\draw[edge] (w13) to[out=180,in=-72] (v4);
\draw[edge] (w13) to[out=155,in=-90] (v2);
\draw[edge] (w23) ..controls (8,-3) and (8,11).. (y);
\draw[edge] (w23) ..controls (10,-3) and (8,11).. (x);
\end{tikzpicture}
    \caption{The graph~$G_5$ from Lemma~\ref{lemma-main}.}\label{fig-main}
    \end{center}
\end{figure}

We can now combine~$(G_1,L_1)$ with~$(G_4,L_4)$ to obtain a gadget~$(G_5,L_5)$ that is $L$-colorable from each half-list assignment~$L$, but not $(L_5:2)$-colorable.
\begin{lemma}\label{lemma-main}
Consider the gadget~$(G_5,L_5)$ obtained from the gadgets~$(G_1,L_1)$ of
    Corollary~\ref{cor-core} and~$(G_4,L_4)$ of Lemma~\ref{lemma-force78exact}
    as follows (see Figure~\ref{fig-main} for an illustration of~$G_5$).  The
    graph~$G_5$ is obtained from the union of the graphs $G_1$ and~$G_4$
    (intersecting in $\{v_1,v_3\}$) by adding the edges $w_{1,3}v_2$,
    $w_{1,3}v_4$, $w_{2,3}x$ and~$w_{2,3}y$.  Let~$L_5(v)=L_4(v)$ for $v\in
    V(G_4)$, $L_5(v)=L_1(v)$ for $v\in V(G_1)\setminus\{v_2,v_4,x,y\}$,
    and~$L_5(v)=L_1(v)\cup \{7,8\}$ for $v\in \{v_2,v_4,x,y\}$.  Then~$G_5$ is
    $L$-colorable for every half-list assignment~$L$, but not
    $(L_5:2)$-colorable.
\end{lemma}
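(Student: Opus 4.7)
The plan is to prove the two assertions separately, using Corollary~\ref{cor-core} together with Lemma~\ref{lemma-force78exact} for the non-colorability part, and the $(v_1,v_3,\{w_{1,3},w_{2,3}\})$-relaxed property of $(G_4,L_4)$ for the colorability part.

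For the non-$(L_5:2)$-colorability, I would argue by contradiction. Given an $(L_5:2)$-coloring $\varphi$ of $G_5$, the restriction $\varphi|V(G_4)$ is an $(L_4:2)$-coloring, so Lemma~\ref{lemma-force78exact} yields $\varphi(w_{1,3})=\varphi(w_{2,3})=\{7,8\}$. The added edges $w_{1,3}v_2$, $w_{1,3}v_4$, $w_{2,3}x$ and $w_{2,3}y$ then force $\varphi(v_2), \varphi(v_4),\varphi(x),\varphi(y)$ to be disjoint from $\{7,8\}$, hence contained in $L_1(v_2),L_1(v_4),L_1(x),L_1(y)$ respectively. Combined with the equality $L_5=L_1$ on $V(G_1)\setminus\{v_2,v_4,x,y\}$, the restriction $\varphi|V(G_1)$ is an $(L_1:2)$-coloring of $G_1$, contradicting Corollary~\ref{cor-core}.

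For the $L$-colorability from an arbitrary half-list assignment $L$, I would split into the two cases given by the relaxed property of $(G_4,L_4)$. In Case~(i), a coloring $\psi_0$ of $\{v_1,v_3\}$ is given such that every $L$-coloring of $\{v_1,v_3,w_{1,3},w_{2,3}\}$ extending $\psi_0$ further extends to $G_4$. I would complete the coloring of $G_1\cup\{w_{1,3},w_{2,3}\}$ greedily in the order $v_5,v_4,v_2,y,x,w_{1,3},w_{2,3}$: at each step the vertex has list of size $2$ or $3$ and strictly fewer already-colored neighbors ($1$ for $v_5$ and $y$; $2$ for the others), so a valid color always exists. The resulting coloring of $\{v_1,v_3,w_{1,3},w_{2,3}\}$ then extends to $G_4$ by hypothesis.

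In Case~(ii), we have $L(v_1)=L(v_3)$ and a coloring $\psi_0$ of $\{w_{1,3},w_{2,3}\}$, say by colors $c_1$ and $c_2$, with the same extension property. I would build a half-list assignment $L'$ for the gadget $(G_1,L_1)$ as follows: keep $L(v_1), L(v_3), L(v_5)$ unchanged; take $L'(v_2)$ to be a $2$-element subset of $L(v_2)\setminus\{c_1\}$, and symmetrically for $L'(v_4)$ and $L'(x)$ (with the appropriate $c_i$); take $L'(y)$ to be a $1$-element subset of $L(y)\setminus\{c_2\}$. The arithmetic $|L_5(v)|-|L_1(v)|=2$ exactly for $v\in\{v_2,v_4,x,y\}$ ensures that after deleting a single color the remaining list still has the size demanded by the half-list assignment of $(G_1,L_1)$, and $L'(v_1)=L'(v_3)$ holds by hypothesis. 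Corollary~\ref{cor-core} then yields an $L'$-coloring of $G_1$, and by construction of $L'$ it conflicts with neither $c_1$ on the edges $w_{1,3}v_2,w_{1,3}v_4$ nor $c_2$ on $w_{2,3}x,w_{2,3}y$; by Case~(ii) of the relaxed property, this proper coloring extends to all of $G_4$. The main obstacle is the precise size-matching in Case~(ii), together with checking that $w_{1,3}$ and $w_{2,3}$ are not adjacent to $v_1$ or $v_3$, so that the extension hypothesis of the relaxed property applies to the combined coloring without further conditions.
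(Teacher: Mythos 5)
Your proposal is correct and follows essentially the same route as the paper: the non-colorability part is the identical contradiction via Lemma~\ref{lemma-force78exact} and Corollary~\ref{cor-core}, and the colorability part uses the same case split from the relaxed property of $(G_4,L_4)$, with only cosmetic differences (your explicit greedy order in case~(i) and explicit construction of the restricted half-list assignment $L'$ in case~(ii) match the paper's "greedily extend'' and its implicit application of Corollary~\ref{cor-core} with forbidden colors on $v_2,v_4,x,y$). The size and adjacency checks you flag as potential obstacles do go through exactly as you describe.
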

\begin{proof}
    Let~$L$ be a half-list assignment for~$G_5$. The gadget~$(G_4,L_5|V(G_4))$ is
    $(v_1,v_3,\{w_{1,3},w_{2,3}\})$-relaxed by Lemma~\ref{lemma-force78exact}.
    Suppose first that~(i) holds for the restriction of~$L$ to~$G_4$
    (with~$S=\{w_{1,3},w_{2,3}\}$), and let~$\psi_0$ be the corresponding
    $L$-coloring of~$\{v_1,v_3\}$.  Greedily extend~$\psi_0$ to an $L$-coloring
    $\psi$ of~$G_1$.  Choose~$c_1\in L(w_{1,3})\setminus
    \{\psi(v_2),\psi(v_4)\}$ and~$c_2\in L(w_{2,3})\setminus
    \{\psi(x),\psi(y)\}$.  By~(i) for~$G_4$, there exists an $L$-coloring
    of~$G_4$ that extends~$\psi_0$ and assigns to~$w_{i,3}$ the color~$c_i$ for
    each~$i\in\{1,2\}$.  This yields, together with~$\psi$, an $L$-coloring
    of~$G_5$.

Suppose next that~(ii) holds for the restriction of~$L$ to~$G_4$
    (with~$S=\{w_{1,3},w_{2,3}\}$), and let~$\psi_0$ be the corresponding
    $L$-coloring of~$\{w_{1,3},w_{2,3}\}$.  Note that $L(v_1)=L(v_3)$ in this
    case.  Corollary~\ref{cor-core} implies that~$G_1$ has an $L$-coloring
    $\psi$ such that $\psi(v_2)\in L(v_2)\setminus\{\psi_0(w_{1,3})\}$,
    $\psi(v_4)\in L(v_4)\setminus\{\psi_0(w_{1,3})\}$, $\psi(x)\in
    L(x)\setminus\{\psi_0(w_{2,3})\}$, and~$\psi(y)\in
    L(y)\setminus\{\psi_0(w_{2,3})\}$.  By~(ii), the restriction of~$\psi\cup
    \psi_0$ to~$\{v_1,v_3,w_{1,3}, w_{2,3}\}$ extends to an $L$-coloring
    of~$G_4$, which together with~$\psi$ gives an $L$-coloring of~$G_5$.

It remains to show that~$G_5$ is not $(L_5:2)$-colorable.  If~$\varphi$ were an
    $(L_5:2)$-coloring of~$G_5$, then by Lemma~\ref{lemma-force78exact} we
    would have $\varphi(w_{1,3})=\varphi(w_{2,3})=\{7,8\}$, and thus the
    restriction of~$\varphi$ to~$G_1$ would be an $(L_1:2)$-coloring, thereby
    contradicting Corollary~\ref{cor-core}.
\end{proof}

\paragraph{The final graph.} We are now in a position to prove Theorem~\ref{thm-main} by simply using a standard construction to ensure uniform lengths of lists.

\begin{proof}[Proof of Theorem~\ref{thm-main}]
Let~$G$ be a graph and~$L'$ an assignment of lists of size~$8$ obtained as follows.
Let~$K$ be a clique with vertices~$r_1, \dotsc, r_4$, and let~$L'(r_1)=\dotsb=L'(r_4)=\{9,\dotsc, 16\}$.
For every $(L':2)$-coloring $\psi$ of~$K$, let~$G_\psi$ be a copy of the graph~$G_5$ from the gadget $(G_5,L_5)$ of Lemma~\ref{lemma-main},
    and for each vertex~$v\in V(G_\psi)$ such that $|L_5(v)|=2k$ with~$k\in\{2,3\}$, we add the edges~$vr_1, \dotsc, vr_{4-k}$ and let
$L'(v)=L_5(v)\cup\bigcup_{i=1}^{4-k}\psi(r_i)$.   If~$G$ had an $(L':2)$-coloring~$\varphi$, then letting~$\psi$ be
the restriction of~$\varphi$ to~$K$, the restriction of~$\varphi$ to~$G_\psi$ would be an $(L_5:2)$-coloring of~$G_5$,
    thereby contradicting Lemma~\ref{lemma-main}.

    Consider now a list assignment~$L$ for~$G$ such that $|L(v)|= 4$ for every~$v\in V(G)$.
    Let~$\varphi$ be any $L$-coloring of~$K$.  For each
    $(L':2)$-coloring~$\psi$ of~$K$, let~$L_{\psi}$ be the list assignment
    for~$G_\psi$ obtained by, for each~$v\in V(G_\psi)$, removing the colors of
    neighbors in~$K$ according to~$\varphi$, and possibly removing further
    colors to ensure that $|L_{\psi}(v)|=|L_5(v)|/2$.  By Lemma~\ref{lemma-main}, the
    graph~$G_\psi$ has an $L_{\psi}$-coloring.  The union of these colorings
    and~$\varphi$ yields an $L$-coloring of~$G$.
\end{proof}

\paragraph{Concluding remarks.} It follows from Theorem~\ref{thm-main} that for
each integer~$a\ge4$, there exists a graph~$G_a$ that is~$a$-choosable but
not~$(2a:2)$-choosable---if we have such a graph~$G_a$, taking the disjoint
union of~$\binom{2(a+1)}{2}$ copies of~$G_a$ and adding a vertex adjacent to
all other vertices yields~$G_{a+1}$, by an argument analogous to the list
uniformization procedure used for the proof of Theorem~\ref{thm-main}. It is
natural to ask whether there exists a graph that is $3$-choosable but not
$(6:2)$-choosable.  We believe this to be the case; in particular,
Corollary~\ref{cor-core} only requires lists of size at most~$6$.  However, it
does not seem easy to construct a gadget that satisfies the properties stated
in Lemma~\ref{lemma-stagad} without using a vertex with a list of size~$8$.

%%% AUTHOR:
%%% Bibliography goes here. Note that the arXiv cannot process bibtex
%%% or biber bibliographies.  Example of acceptable bibliograpy format:
\bibliographystyle{amsplain}

%% AUTHOR: You can generate such a bibliography from a .bib file by 
%% running pdflatex/bibtex/pdflatex/pdflatex and then pasting the .bbl file
%% between \begin{thebibliography} and \end{bibliography}

%%% AUTHOR: Include a short description of each author following the
%%% structure below. Use the same short tags used previously.  
%%% Use \imageat{} and \imagedot{} instead of "@" and "." in
%%% email addresses-this replaces the symbols with graphics to avoid 
%%% e-mail address harvesting from the .pdf file
\begin{aicauthors}
\begin{authorinfo}[zd]
    Zden\v{e}k Dvo\v{r}{\'a}k\\
  Associate Professor\\
  Computer Science Institute (CSI) of Charles University\\
  Prague, Czech Republic\\
  rakdver\imageat{}iuuk\imagedot{}mff\imagedot{}cuni\imagedot{}cz\\
  \url{https://iuuk.mff.cuni.cz/~rakdver/}
\end{authorinfo}
\begin{authorinfo}[xh]
  Xiaolan Hu\\
  Assistant Professor\\
  School of Mathematics and Statistics $\&$ Hubei Key Laboratory of Mathematical Sciences\\
    Central China Normal University, Wuhan, PR China.\\
  xlhu\imageat{}mail\imagedot{}ccnu\imagedot{}edu\imagedot{}cn\\
  \url{http://maths.ccnu.edu.cn/info/1042/15221.htm}
\end{authorinfo}
\begin{authorinfo}[jss]
  Jean-S{\'e}bastien Sereni\\
  Directeur de recherche\\
  Centre National de la Recherche Scientifique\\
  CSTB (ICube), Strasbourg, France.\\
  sereni\imageat{}kam\imagedot{}mff\imagedot{}cuni\imagedot{}cz\\
  \url{http://lbgi.fr/~sereni/index.html}
\end{authorinfo}
\end{aicauthors}

\end{document}